\theoremstyle{definition}
\newtheorem{theorem}{Theorem}
\newtheorem{proposition}[theorem]{Proposition}
\newtheorem{lemma}[theorem]{Lemma}
\newtheorem{definition}[theorem]{Definition}
\newtheorem{remark}[theorem]{Remark}
\numberwithin{theorem}{section}
\newcommand{\ran}{\mathsf{ran}}
\newcommand{\D}{\mathsf{D}^2_2}
\newcommand{\SRT}{\mathsf{SRT}^2_2}
\newcommand{\SPT}{\mathsf{SPT}^2_2} 
\newcommand{\SIPT}{\mathsf{SIPT}^2_2}
\newcommand{\lW}{\le_{\rm W}}
\newcommand{\lsW}{\le_{\rm sW}}
\newcommand{\lsc}{\le_{\rm sc}}
\begin{document}
%\title{Strong Reductions between Stable Relatives of Ramsey's Theorem for Pairs}
\title[Strong Reductions \& Stable Ramsey's Theorem]{Strong Reductions between Relatives of the Stable Ramsey's Theorem}
\author{David Nichols}
\address{University of Connecticut \\ {\tt david@davidmathlogic.com}}
\thanks{The author is indebted to Damir Dzhafarov, Ludovic Patey, and Reed Solomon for input during the formation of this paper. This paper is part of the author's PhD thesis in progress at the University of Connecticut. The writing of this paper was supported in part by the summer fellowship granted by that institution's mathematics department.}
\maketitle
%\tableofcontents

\begin{abstract} 
%Classically, reverse mathematics sorts mathematical statements according to which statements suffice to prove which other statements over a fixed base theory. However, as detailed for example in~\cite{damirfirst}, finer analyses are available, on which two logically equivalent statements may be distinguishable in computability theoretic ways. Moreover, the information that two logically equivalent statements are not equivalent under notions of computable reduction has implications regarding the structure of proofs of one statement from the other. The principles $\mathsf{SRT}^2_2$, $\mathsf{SPT}^2_2$, $\mathsf{SIPT}^2_2$, and $\mathsf{D}^2_2$ are equivalent over $\mathsf{RCA}_0$, i.e. in the sense of classical reverse mathematics. These principles are not equivalent, however, from the point of view of stronger reductions, and we prove this fact using some elaborations of a novel tree-labeling construction introduced in~\cite{combinatorialprinciples} and~\cite{etal}. The result is a complete picture of all the ways these four principles relate in terms of computable reductions. 
A complete analysis is given of the computable reductions that hold between $\mathsf{SRT}^2_2$, $\mathsf{SPT}^2_2$, and $\mathsf{SIPT}^2_2$. In particular, while $\mathsf{D}^2_2\le_{\rm sW}\mathsf{SIPT}^2_2\le_{\rm sW}\mathsf{SPT}^2_2\le_{\rm sW}\mathsf{SRT}^2_2$, it is shown that $\mathsf{SRT}^2_2\not\le_{\rm sc}\mathsf{SPT}^2_2\not\le_{\rm sc}\mathsf{SIPT}^2_2\not\le_{\rm sc}\mathsf{D}^2_2$. 
\end{abstract}

\section{Introduction}

Much recent work in reverse mathematics has centered on combinatorial principles related to Ramsey's theorem. This paper will examine four such principles and will make a detailed analysis of their logical relationships. All of these principles involve colorings, for which we provide the following definitions. 

\begin{definition} $~$
\begin{itemize}
	\item If $S\subseteq\omega$, then $[S]^2$ denotes the set of all 2-element subsets of $S$. 
	\item A \textit{2-coloring of pairs} is a function $f:[\omega]^2\to 2$. We hereafter write $f(x,y)$ for $f(\{x,y\})$ with $x<y$, and ``coloring'' should be understood to mean ``2-coloring of pairs.'' A 2-coloring of pairs is \textit{stable} if $\lim_u f(x,u)$ exists for all $x\in\omega$. 
\end{itemize}
\end{definition}

Given a coloring, we are interested in the existence of sets homogeneous for the coloring in the following senses. 

\begin{definition} $~$
\begin{itemize} 
	\item An infinite set $H\subseteq\omega$ is \textit{homogeneous} for a coloring $f$ if $f\upharpoonright[H]^2$ is constant. 
	\item An infinite set $H = H_L \oplus H_R$ is p-\textit{homogeneous} for a coloring $f$ if $f\upharpoonright H_L\times H_R$ is constant, and is \textit{increasing {\rm p}-homogeneous} if $f\upharpoonright\{\{x,y\}\mid x<y,\,x\in H_L,\,y\in H_R\}$ is constant. 
	\item An infinite set $H$ is \textit{limit homogeneous} for a coloring $f$ if $\lim_u f(\cdot,u)$ is total and constant on $H$. 
\end{itemize} 
\end{definition}

With these definitions in hand, we state and name the following principles, in whose relative strength we are interested. 

\newpage
\begin{definition} $~$
\begin{itemize} 
	%\item If $X\subseteq\omega$ is infinite, $\mathsf{Inc}(X)$ denotes the set of all increasing sequences of elements of $X$. 
	\item $\mathsf{SRT}^2_2$ is the statement that for every stable 2-coloring of pairs $f:[\omega]^2\to 2$ there exists a set $H$ homogeneous for $f$. 
	\item $\mathsf{SPT}^2_2$ is the statement that for every stable 2-coloring of pairs $f:[\omega]^2\to 2$ there exists a set $H$ $p$-homogeneous for $f$. 
	\item $\mathsf{SIPT}^2_2$ is the statement that for every stable 2-coloring of pairs $f:[\omega]^2\to 2$ there exists a set $H$ increasing $p$-homogeneous for $f$. 
	\item $\mathsf{D}^2_2$ is the statement that for every stable 2-coloring of pairs $f:[\omega]^2\to 2$ there exists a set $H$ limit homogeneous for $f$
\end{itemize} 
\end{definition}

$\mathsf{SPT}^2_2$ and $\mathsf{SIPT}^2_2$ were first studied in the context of combinatorics by Erd\H{o}s and Rado in~\cite{erdos} and in the context of reverse mathematics by Dzhafarov and Hirst in~\cite{polarized}. 
%Dzhafarov studied these principles in terms of computable and Weihrauch reductions, to whose definition we now turn. 
Our interest will be in studying these principles in terms of computable and Weihrauch reductions, to whose definition we now turn. 
The principles $\mathsf{SRT}^2_2$, $\mathsf{SPT}^2_2$, $\mathsf{SIPT}^2_2$, and $\mathsf{D}^2_2$, together with many of the principles encountered in reverse mathematics, take the syntactic form 
$$\forall X (\Phi(X) \rightarrow \exists Y \Psi(X,Y),$$ 
where $\Phi$ and $\Psi$ are arithmetical predicates. We call principles of this form \textit{problems}, and given a problem $\mathsf{P}$ we call objects $X$ such that $\Phi(X)$ holds \textit{instances} of the problem and objects $Y$ such that $\Phi(X,Y)$ holds \textit{solutions} to the instance $X$ of the problem $\mathsf{P}$. Given this language, we define the following notions of reducibility between two problems. 

\begin{definition} 
Let $\mathsf{P}$ and $\mathsf{Q}$ be problems. 
\begin{itemize} 
	\item $\mathsf{P}$ is \textit{computably reducible} to $\mathsf{Q}$, written $\mathsf{P} \le_{\rm c} \mathsf{Q}$, if every instance $X$ of $\mathsf{P}$ computes an instance $\widehat{X}$ of $\mathsf{Q}$ such that whenever $\widehat{Y}$ solves $\widehat{X}$, $X\oplus\widehat{Y}$ computes a solution $Y$ to $X$. 
	\item $\mathsf{P}$ is \textit{strongly computably reducible} to $\mathsf{Q}$, written $\mathsf{P} \le_{\rm sc} \mathsf{Q}$, if every instance $X$ of $\mathsf{P}$ computes an instance $\widehat{X}$ of $\mathsf{Q}$ such that whenever $\widehat{Y}$ solves $\widehat{X}$, $\widehat{Y}$ computes a solution $Y$ to $X$. 
	\item $\mathsf{P}$ is \textit{Weihrauch reducible} to $\mathsf{Q}$, written $\mathsf{P} \le_{W} \mathsf{Q}$, if there are Turing functionals $\Phi$ and $\Gamma$ such that whenever $X$ is an instance of $\mathsf{P}$, $\Phi^X$ is an instance of $\mathsf{Q}$, and whenever $\widehat{Y}$ solves $\Phi^X$, $\Gamma^{X\oplus\widehat{Y}}$ solves $X$. 
	\item $\mathsf{P}$ is \textit{strongly Weihrauch reducible} to $\mathsf{Q}$, written $\mathsf{P} \le_{sW} \mathsf{Q}$, if there are Turing functionals $\Phi$ and $\Gamma$ such that whenever $X$ is an instance of $\mathsf{P}$, $\Phi^X$ is an instance of $\mathsf{Q}$, and whenever $\widehat{Y}$ solves $\Phi^X$, $\Gamma^{\widehat{Y}}$ solves $X$. 
\end{itemize} 
\end{definition} 

These four notions are related by the following implications and no others~\cite{hirschfeldt}. 

\begin{center}
\begin{tikzpicture}
\pgfmathsetmacro{\len}{2}

\node (sc) at (0,0) {\Large $\le_{\rm sc}$};
\node (sW) at ({1.1*\len},\len) {\Large $\le_{\rm sW}$};
\node (W) at ({1.1*2*\len},0) {\Large $\le_{\rm W}$}; 
\node (c) at ({1.1*\len},{-1*\len}) {\Large $\le_{\rm c}$};

\draw[double,-stealth] (sW) -- (sc); 
\draw[double,-stealth] (sW) -- (W); 
\draw[double,-stealth] (sc) -- (c);
\draw[double,-stealth] (W) -- (c);
\end{tikzpicture}
\end{center}

Each of these notions is meant to capture the intuitive idea that if one has the ability to solve $\mathsf{Q}$, one may in an algorithmic way use this ability to solve $\mathsf{P}$. While such ideas have been used at least implicitly in the reverse mathematics literature for some time, they were first presented as objects of study quite recently. Weihrauch and strong Weihrauch reducibility were introduced by Weihrauch in~\cite{weihrauch} in the context of degrees of discontinuity, and in the context of reverse mathematics were independently discovered by  Dorais, Dzhafarov, Hirst, Mileti, and Shafer~\cite{doraisetal}. Computable reducibility was developed by Dzhafarov~\cite{damirfirst}. 

Each of the principles $\mathsf{SRT}^2_2$, $\mathsf{SPT}^2_2$, $\mathsf{SIPT}^2_2$, and $\mathsf{D}^2_2$ is equivalent to each of the others over $\mathsf{RCA}_0$~\cite{chonglemppyang}. This paper will analyze the relationships between these principles in terms of the stronger notions of reducibility introduced in Definition 1.4. It is easy to see that $\D\lsW\SIPT\lsW\SPT\lsW\SRT$. Furthermore, we have 
\begin{proposition} 
$\mathsf{SRT}^2_2 \le_{\rm W} \mathsf{SPT}^2_2\le_W \mathsf{SIPT}^2_2$. 
\end{proposition}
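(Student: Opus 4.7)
The plan is to prove both reductions by taking the forward functional $\Phi$ to be the identity, so that the $\SPT$ (respectively $\SIPT$) instance produced from a stable coloring $f$ is simply $f$ itself. All the work sits in the backward functional $\Gamma$, which uses $f$ together with a weaker solution type (p-homogeneous, respectively increasing p-homogeneous) to compute a stronger solution type (homogeneous, respectively p-homogeneous). This is why the reductions must genuinely use $X$, and in particular cannot be improved to strongly Weihrauch reductions (compatibly with the non-reductions $\SRT \not\lsc \SPT$ and $\SPT \not\lsc \SIPT$ promised later in the paper).

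First I would isolate a key lemma: if $f$ is stable and $A \subseteq \omega$ is an infinite set on which $\lim_u f(x,u) = c$ for a fixed $c \in 2$, then $f \oplus A$ uniformly computes an infinite homogeneous subset $H \subseteq A$ of color $c$. The construction is the standard recursive one: having chosen $x_0 < \cdots < x_n$ in $A$ with $f(x_i, x_j) = c$ for $i < j \le n$, let $x_{n+1}$ be the least element of $A$ exceeding $x_n$ with $f(x_i, x_{n+1}) = c$ for every $i \le n$. Such an element always exists, since each $\{u : f(x_i,u) \ne c\}$ is finite and $A$ is infinite, so the set of valid successors in $A$ is cofinite in $A$; the search for it is clearly effective relative to $f \oplus A$.

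Second, I would observe that for \emph{either} a p-homogeneous or an increasing p-homogeneous decomposition $H_L \oplus H_R$ of value $c$, the left component $H_L$ consists entirely of elements $x$ with $\lim_u f(x,u) = c$. The reason is uniform across the two cases: for any $x \in H_L$, there are infinitely many $y \in H_R$ with $y > x$, and each such pair satisfies $f(x,y) = c$ under either hypothesis (since $x$ is the smaller element and lies in $H_L$); stability of $f$ then forces the limit to equal $c$.

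Combining the two ingredients, $\SRT \lW \SPT$ follows at once: given p-homogeneous $H_L \oplus H_R$, invoke the key lemma with $A = H_L$ and output the resulting homogeneous set. For $\SPT \lW \SIPT$, given increasing p-homogeneous $H_L \oplus H_R$, the same application yields an infinite homogeneous set $H$ of color $c$; then partitioning $H$ into two disjoint infinite pieces $H_L' \sqcup H_R'$, say by parity of enumeration index, yields a p-homogeneous decomposition, because $f$ being constant $c$ on $[H]^2$ forces it constant on $H_L' \times H_R'$. I do not foresee a serious obstacle; if anything, the only point requiring care is bookkeeping in the key lemma to ensure that the search for $x_{n+1}$ is carried out uniformly in $n$ and in the parameters $x_0, \dots, x_n$, which is routine.
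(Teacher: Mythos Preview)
Your proposal is correct and essentially identical to the paper's approach: the paper's Lemma~1.6 is exactly your key lemma, and the paper likewise observes that the left column $I_0$ (your $H_L$) is limit-homogeneous, applies the lemma, and outputs $H\oplus H$ as the $p$-homogeneous set. The only detail you leave implicit is how to determine the color $c$ uniformly from the data $f\oplus(H_L\oplus H_R)$; the paper reads it off as $f(i_0,i_1)$ for the least $i_0\in H_L$ and the least $i_1>i_0$ in $H_R$, which you should state explicitly since your recursive search in the key lemma depends on knowing $c$.
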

\noindent We will use the following lemma to prove Proposition 1.5. 

\begin{lemma}
Fix $i<2$. There is a Turing functional $\Phi$ such that if $f:[\omega]^2\to 2$ is a stable 2-coloring of pairs and $L$ is an infinite set limit homogeneous for $f$ with color $i$, then $\Phi^{f\oplus L\oplus\{i\}}$ describes an infinite set $H$ homogeneous for $f$. 
\end{lemma}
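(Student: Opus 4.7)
The plan is to build $H$ as an increasing subset of $L$ by a stage-by-stage greedy search, with the defining property of limit homogeneity guaranteeing that each stage succeeds. Specifically, $\Phi$ reads $i$ from its oracle and enumerates $L$ in increasing order. At stage $s$, having already produced $x_0 < \cdots < x_{s-1}$ in $L$ with $f(x_j, x_k) = i$ for all $j < k < s$, $\Phi$ searches through $y \in L$ with $y > x_{s-1}$ for the least such $y$ satisfying $f(x_j, y) = i$ for every $j < s$, and commits $x_s := y$. The output is $H = \{x_0, x_1, \ldots\}$.

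The key observation is that the search at each stage always terminates. Since $L$ is limit homogeneous for $f$ with color $i$, for each $j < s$ we have $\lim_u f(x_j, u) = i$, so there is a threshold $T_j$ with $f(x_j, u) = i$ for all $u > T_j$. Setting $T = \max_{j < s} T_j$ and using that $L$ is infinite, we can find $y \in L$ with $y > \max(T, x_{s-1})$, and any such $y$ is a valid witness. Hence the search halts at the least such witness in $L$, which is computable from $f \oplus L \oplus \{i\}$. Iterating, $\Phi^{f \oplus L \oplus \{i\}}$ enumerates an infinite set $H \subseteq L$ with $f(x_j, x_k) = i$ for all $j < k$, so $H$ is homogeneous for $f$ as required.

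I do not anticipate a real obstacle here: the routine is uniform and effective relative to the oracle, which supplies exactly the data needed, namely the coloring to query, the set on which the search ranges, and the target color. The same functional $\Phi$ handles both $i = 0$ and $i = 1$ since $i$ is read from the oracle rather than hard-coded, which is precisely what will make the lemma useful for deducing Weihrauch (rather than merely computable) reductions in the subsequent application to Proposition 1.5.
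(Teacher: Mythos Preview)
Your proposal is correct and follows essentially the same approach as the paper: both build $H$ greedily inside $L$ by taking at each stage the least new element of $L$ that receives color $i$ with all previously chosen elements, and both justify termination of each search using the fact that $L$ is limit homogeneous with color $i$. Your write-up is in fact slightly more explicit about uniformity and about why the search halts, but the underlying argument is identical.
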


\begin{proof}[Proof of Lemma 1.6] Fix $f$ and $L$. We may compute from $f\oplus L\oplus\{i\}$ an infinite set $H = \{h_0,h_1,h_2,\dots\}$ homogeneous for $f$ as follows. Let $h_0$ be the least element of $L$, $h_1$ the least element of $L$ such that $\{h_0,h_1\}$ is finite homogeneous for $f$ with color $i$, and $h_{k+1}$ the least element of $L$ such that $\{h_0,h_1,\dots,h_k\}$ is finite homogeneous for $f$ with color $i$. The stability of $f$ and the fact that $L$ is limit homogeneous for $f$ with color $i$ imply that this enumeration is computable from $f$, $L$, and $i$; this together with the ordering of enumeration implies that $H$ is computable in the given data.
\end{proof}

\begin{proof}[Proof of Proposition 1.5]
Fix a stable 2-coloring of pairs $f:[\omega]^2\to 2$ and an infinite set $I = I_0\oplus I_1$ increasing $p$-homogeneous for $f$. Let $i_0$ be the least element of $I_0$ and $i_1$ the least element of $I_1$ such that $i_0 < i_1$. Because $f$ is stable and $I$ is increasing $p$-homogeneous for $f$, $I_0$ is limit homogeneous for $f$ with color $f(i_0,i_1)$. Since $f(i_0,i_1)$ and $I_0$ are computable from $f\oplus I$, by the lemma $f\oplus I$ computes an infinite set $H$ homogeneous for $f$, hence the infinite set $H\oplus H$ $p$-homogeneous for $f$.
\end{proof}

By contrast to Proposition 1.5, Dzhafarov~\cite{combinatorialprinciples} showed that $\SRT\not\lW\D$ and $\SRT\not\lsc\D$. 
We strengthen the latter result by replacing $\SRT$ with $\SIPT$ as follows. 
%In this paper we show that the results of Proposition 1.5 do not hold for strong Weihrauch reducibility and that in fact the following theorems obtain. 

%Analyzed in terms of computable reductions, however, each principle separates from the other three---a result that lines up with intuition. This paper will in fact provide a complete analysis of how each of these four principles relates to each of the others in terms of each of the four notions of computable reducibility. A summary of this analysis appears at the end of the paper, while the theorems are stated below. 

%%%%%%%%%%%%%%%%%%%%%%%%%%%%%%%

\begin{theorem}
$\mathsf{SIPT}^2_2\not\le_{\rm sc}\mathsf{D}^2_2$.
\end{theorem}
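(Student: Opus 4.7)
The plan is to refute the reducibility by diagonalizing against an arbitrary pair $(\Phi, \Gamma)$ of Turing functionals. For such a pair, I would construct a stable $2$-coloring $f \colon [\omega]^2 \to 2$ together with an infinite set $L \subseteq \omega$ such that either $\Phi^f$ is not a stable $2$-coloring of pairs (so the pair already fails to be a reduction), or $L$ is limit-homogeneous for $\Phi^f$ while $\Gamma^L$ is not an increasing $p$-homogeneous set for $f$. Since $(\Phi, \Gamma)$ is arbitrary, this establishes $\SIPT \not\lsc \D$.

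I would build $f$ and $L$ by a finite-extension argument, maintaining at stage $s$ a finite partial stable $2$-coloring $\sigma_s$ (a function on $[N_s]^2$ together with committed column-limit values $c_s(x) \in \{0,1\}$ for each $x < N_s$ that all later extensions must respect), a finite set $L_s$, and a fixed guess $i \in \{0,1\}$ for the limit color of $L$ under $\Phi^f$. Each stage alternates two actions. First, enlarge $L_s$ by some fresh $n > N_s$ and extend $\sigma_s$ just enough to force $\lim_y \Phi^f(n, y) = i$; if this fails across all extensions for both values of $i$, then $\Phi^f$ cannot be a stable coloring with an infinite column-limit class, and the requirement is met automatically. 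Second, search for an extension $\tau \supseteq \sigma_s$ together with pairs $(u_1, v_1)$ and $(u_2, v_2)$ satisfying $u_j < v_j$ such that $\Gamma^{L_s}$ enumerates $u_j$ into the left half and $v_j$ into the right half of its putative output; given such data, set $\tau(u_1, v_1) \neq \tau(u_2, v_2)$, so that the set coded by $\Gamma^L$ contains two increasing cross-pairs with opposite $f$-colors and hence cannot be increasing $p$-homogeneous. If no such extension is ever found across stages, then $\Gamma^L$ enumerates only finitely many cross-pairs between its halves, so it fails to code an infinite increasing $p$-homogeneous set at all. Running the construction for each of the two guesses $i \in \{0,1\}$ and selecting the successful branch handles the possibility that only one color class of $\Phi^f$ is infinite.

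The main obstacle is coordinating these two kinds of extension so they do not interfere with one another. Committing $\sigma_s$-values to pin the column limit of $\Phi^f$ at a new $n$ in $L_{s+1}$ may remove the freedom to later set diagonalizing values on pairs $(u_j, v_j)$ designated by $\Gamma$; conversely, setting $\tau(u_j, v_j)$ during a $\Gamma$-diagonalization could disturb the use of $\Phi^f$ at some column already in $L_s$. I expect to resolve this through careful use-bookkeeping familiar from forcing in reverse mathematics: when enlarging $L_s$ with $n$, commit to $f(x, y)$-values only for $y$ above a threshold chosen large enough that subsequent $\Gamma$-diagonalizations can operate on pairs with strictly smaller second coordinates, separating the two kinds of commitment. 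Verification at the end checks that $f = \bigcup_s \sigma_s$ is a stable $2$-coloring, that $L = \bigcup_s L_s$ is infinite and limit-homogeneous for $\Phi^f$ when the latter is stable, and that $\Gamma^L$ fails by one of the two means above.
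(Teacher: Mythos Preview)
Your proposal has a quantifier error. By constructing a separate $f$ for each pair $(\Phi,\Gamma)$ and then concluding ``since $(\Phi,\Gamma)$ is arbitrary,'' you establish at most $\SIPT\not\le_{\rm sW}\D$, which is strictly weaker than what is claimed. The negation of $\le_{\rm sc}$ demands a \emph{single} instance $f$ of $\SIPT$ such that for \emph{every} $f$-computable instance $g$ of $\D$ there is some limit-homogeneous $L$ for $g$ that computes no increasing $p$-homogeneous set for $f$---meaning $\Gamma^L$ must fail for \emph{all} $\Gamma$ simultaneously, not just the one you built $L$ against. Even if you dovetailed all the requirements into one construction, your diagonalization step has a further gap: once you commit a column limit $c_s(u)$, the value $f(u,v)$ is forced for all large $v$; but $\Gamma^L$ (whose oracle is only $L$, not $f$) may place into its left column elements $u$ whose limits are already committed, so you cannot freely set two cross-pair colors to disagree. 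Your ``use-bookkeeping'' suggestion does not address this, since it is $\Gamma$, not you, that chooses which $u$'s appear.

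The paper's proof takes a completely different and much shorter route. It writes down a single explicit $f$, defined from the least modulus of a fixed enumeration of $\emptyset'$, arranged so that every increasing $p$-homogeneous set for $f$ computes $\emptyset'$. It then invokes cone avoidance for $\D$ (every instance of $\D$ has a solution that does not compute $\emptyset'$) to conclude immediately that no solution to any $f$-computable $\D$-instance can compute a solution to $f$. No per-functional diagonalization is needed.
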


This follows by a straightforward adaptation of the proof of Corollary 3.6 in~\cite{combinatorialprinciples}, but in addition we provide the following much simpler proof. 

\begin{proof}[Proof of Theorem 1.7]
We will choose $f:[\omega]^2\to 2$ be a non-computable instance of $\mathsf{SIPT}^2_2$ all of whose solutions compute $\emptyset'$. To that end, fix a c.e. approximation of $\emptyset'$, $\{X_s\}_{s\in\omega}$, with least modulus $\mu$. Now define 
$$f(x,y) = \begin{cases} 
0, & \quad\text{if }y-x\le {\rm max}\{\mu(z):\,z\le x\}; \\ 
1, & \quad\text{otherwise.}
\end{cases}$$
Then $\lim_u f(x,u) = 1$ for all $x\in\omega$. 
Let $H=H_L\oplus H_R$ be any solution to $f$ (so $H$ is increasing $p$-homogeneous for $f$ with color 1). Then if $z\in\omega$, we can compute from $H$ whether $z\in\emptyset'$ as follows: find the least $x\in H_L$ with $z\le x$, and the least $y > x$ with $y\in H_R$. Since $f(x,y) = 1$, $y-x > \mu(z)$. Thus $z\in\emptyset'$ if and only if $z\in X_{y-x}$. 

So every solution to $f$ computes $\emptyset'$. But every instance of $\mathsf{D}^2_2$---in particular, every instance computable from $f$---has a solution which does not compute $\emptyset'$ (\cite{coneavoidance}), hence does not compute any infinite set increasing $p$-homogeneous for $f$.
\end{proof}

Our focus in this paper is on the following two theorems, which together with Theorem 1.7 show that none of the implications in Proposition 1.5 can be strengthened to strong computable reductions.

% FIRST MAIN THEOREM -- statement
\begin{theorem}[First Main Theorem]
$\mathsf{SRT}^2_2\not\le_{\rm sc}\mathsf{SPT}^2_2$.
\end{theorem}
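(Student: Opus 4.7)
The strategy is to exhibit a stable $2$-coloring $f:[\omega]^2\to 2$ satisfying
(P1) $\emptyset'\not\le_T f$, and
(P2) every infinite $f$-homogeneous set computes $\emptyset'$,
and then to combine this with cone avoidance for $\D$ through Lemma~1.6.

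Granting such an $f$, fix any stable $2$-coloring $\widehat{f}\le_T f$, viewed as an arbitrary instance of $\SPT$ computable from $f$. Property (P1) gives $\emptyset'\not\le_T\widehat{f}$, so cone avoidance for $\D$~\cite{coneavoidance} yields a limit-homogeneous set $L$ for $\widehat{f}$, of some color $i<2$, with $\emptyset'\not\le_T L\oplus\widehat{f}$. Applying Lemma~1.6 to $\widehat{f}\oplus L\oplus\{i\}$ produces an infinite homogeneous set $H$ for $\widehat{f}$; then $\widehat{H}:=H\oplus H$ is a $p$-homogeneous set for $\widehat{f}$ with $\widehat{H}\le_T L\oplus\widehat{f}$, whence $\emptyset'\not\le_T\widehat{H}$. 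But (P2) forces every infinite homogeneous set for $f$ to compute $\emptyset'$, so $\widehat{H}$ computes none, witnessing $\SRT\not\lsc\SPT$.

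The main obstacle is the construction of $f$ satisfying (P1) and (P2) simultaneously. The modulus-threshold construction of Theorem~1.7 delivers (P2) effortlessly but collapses $f$ to the Turing degree of $\emptyset'$, violating (P1); so the coding of $\emptyset'$ into homogeneous sets has to be concealed from $f$ itself. I plan to build $f$ as the limit of a sequence of finite partial stable colorings, via a forcing argument in the spirit of Dzhafarov's proof of $\SRT\not\lsc\D$ in~\cite{combinatorialprinciples}: at each stage one diagonalizes against a Turing functional $\Phi_e$ to secure $\Phi_e^f\ne\emptyset'$ (yielding (P1)), while restricting admissible extensions in a Mathias-like fashion so that any infinite set which will ultimately be monochromatic for $f$ is forced to grow at least as quickly as a preselected modulus of $\emptyset'$ (yielding (P2)). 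The technical heart is reconciling the two families of requirements: the coding must be powerful enough to surface through every infinite homogeneous subset, yet sparse enough to remain invisible to $f$ alone. Once this $f$ is in hand, Theorem~1.8 falls out of the cone-avoidance argument above as a strengthening of Dzhafarov's theorem, via the routing through $\SPT$ afforded by Lemma~1.6.
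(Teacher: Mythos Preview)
Your plan has a fatal gap: properties (P1) and (P2) are mutually incompatible, so no such $f$ exists. Indeed, your own reduction argument applied with $\widehat f = f$ destroys (P2). If $\emptyset'\not\le_T f$, then cone avoidance for $\D$ (the very fact you invoke) yields a limit-homogeneous set $L$ for $f$ with $\emptyset'\not\le_T L\oplus f$; by Lemma~1.6, $L\oplus f$ computes an infinite homogeneous set $H$ for $f$, so $\emptyset'\not\le_T H$, contradicting (P2). More generally, Seetapun's cone avoidance theorem for $\mathsf{RT}^2_2$ (relativized) says that for any $f$ and any $C\not\le_T f$ there is an $f$-homogeneous set $H$ with $C\not\le_T H\oplus f$; taking $C=\emptyset'$ rules out (P1)$\wedge$(P2) outright. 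The forcing construction you sketch therefore cannot succeed, no matter how the requirements are interleaved.

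The paper's proof avoids this obstruction by abandoning the idea of a fixed complexity target like $\emptyset'$. Instead of arranging that homogeneous sets for $f$ compute some predetermined hard set, it builds $f$ together with a countable family $Y$ of auxiliary oracles and ensures, via a tree-labeling forcing, that for each $f$-computable stable coloring $\widehat f$ there is a $p$-homogeneous set $\widehat H$ which either is $(f\oplus P)$-computable for some $P\in Y$ (and genericity of $f$ over $P$ blocks $\widehat H$ from computing a homogeneous set for $f$), or is constructed directly so that the specific functional under consideration fails to output a homogeneous set. The diagonalization is thus against individual functionals rather than against a cone, which is why it is consistent with cone avoidance.
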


% SECOND MAIN THEOREM -- statement 
\begin{theorem}[Second Main Theorem]
$\mathsf{SPT}^2_2\not\le_{\rm sc}\mathsf{SIPT}^2_2$.
\end{theorem}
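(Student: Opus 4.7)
The plan is a direct forcing construction. I will build the stable coloring $f$ together with, for each Turing functional pair $(\Phi_e,\Gamma_e)$, an increasing p-homogeneous set $\widehat H_e = \widehat H_{e,L}\oplus\widehat H_{e,R}$ for $\Phi_e^f$ (on the assumption that $\Phi_e^f$ really is a stable coloring); the goal is that $\Gamma_e^{\widehat H_e}$ fails to be a p-homogeneous set for $f$ for every $e$. Since any candidate strong computable reduction $\SPT\lsc\SIPT$ arises from some such pair $(\Phi_e,\Gamma_e)$, this suffices to prove the theorem.

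Conditions in the forcing are finite approximations $\sigma$ to $f$ together with finite initial segments of each $\widehat H_e$. I maintain two global invariants: (a) $\sigma$ is consistent with the target stability $\lim_u f(x,u)=1$ for every $x$, so that every p-homogeneous set for $f$ must have color $1$; and (b) each $\widehat H_e$ is increasing p-homogeneous for $\Phi_e^\sigma$ with a fixed target color. To meet requirement $R_e$, I wait for a stage at which the use of $\Gamma_e^{\widehat H_e}$ commits to some $x\in H_L'$ and $y\in H_R'$ with $x<y$ in its purported p-homogeneous output $H_L'\oplus H_R'$ for $f$, and then extend $\sigma$ by setting $f(x,y)=0$, breaking the color-$1$ p-homogeneity of $H_L'\oplus H_R'$ at this pair.

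The pair $(x,y)$ must be chosen freshly: outside the use of $\Phi_i^\sigma$ on every cross pair already committed in any $\widehat H_i$, and outside the finitely many pairs of $\sigma$ already assigned the value $0$. Since all these commitments are finite at each stage, a sufficiently large such $(x,y)$ in the committed output of $\Gamma_e^{\widehat H_e}$ can always be found by waiting long enough (and if $\Gamma_e^{\widehat H_e}$ never produces enough output to permit this, then its output is not an infinite p-homogeneous set and $R_e$ is met vacuously). The essential asymmetry enabling freshness is that increasing p-homogeneity of $\widehat H_e$ only constrains $\Phi_e^f(x',y')$ for $x'<y'$ with $x'\in\widehat H_{e,L}$ and $y'\in\widehat H_{e,R}$, leaving the reverse-direction pairs between the two halves of each $\widehat H_e$ genuinely free for diagonalization.

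The main obstacle is the priority bookkeeping that coordinates (i) infinite growth of each $\widehat H_e$ (so that it is a valid $\SIPT$ solution), (ii) eventual diagonalization for each $R_e$, and (iii) preservation of the target stability $\lim_u f(x,u)=1$ throughout. The fresh-pair device above addresses each individual requirement without disturbing lower-priority work, and a standard finite-injury priority construction, along the lines of the related non-reducibility arguments in~\cite{combinatorialprinciples}, should knit these requirements into the full construction.
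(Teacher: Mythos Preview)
There is a genuine gap at the very first step: your reduction of the problem to pairs $(\Phi_e,\Gamma_e)$ captures only the \emph{uniform} notion $\le_{\rm sW}$, not $\le_{\rm sc}$. In a strong computable reduction the backward map is non-uniform: from a single instance $\widehat f=\Phi^f$ of $\SIPT$, \emph{every} solution $\widehat H$ must compute a $p$-homogeneous set for $f$, but different solutions may use different algorithms. Hence to refute $\SPT\le_{\rm sc}\SIPT$ you must produce, for each $\Phi$, one set $\widehat H_\Phi$ that simultaneously defeats \emph{all} $\Gamma$. Your construction, which builds a separate $\widehat H_e$ for each pair and only kills $\Gamma_e^{\widehat H_e}$, leaves open that some other $\Gamma'$ succeeds on $\widehat H_e$; at best it yields $\SPT\not\le_{\rm sW}\SIPT$, which is strictly weaker than the theorem.

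Even after fixing the quantifier structure, the diagonalization sketch does not go through. Your ``asymmetry'' observation concerns pairs between the columns of $\widehat H_e$, but the pair $(x,y)$ you must spoil lies in the \emph{output} $H'_L\oplus H'_R=\Gamma^{\widehat H_e}$, which in general bears no relation to the columns of $\widehat H_e$; there is no reason a fresh pair in $H'_L\times H'_R$ should exist at the moment you need it. The real tension---completely absent from your outline---is that each time you extend $\widehat H_\Phi$ you must \emph{force} the $\Phi^f$-limit color of the new element, and doing so commits values of $f$ (and the limiting colors $l(a),l(b)$ of potential witnesses) in ways that can block exactly the diagonalization you intend. The paper handles this by building two candidate sets $H^\Phi_0,H^\Phi_1$ (one per color), by allowing the limits of $f$ to vary rather than pinning them all to $1$, and by the tree-labeling machinery of Section~2: one first explores all finite extensions of $H^\Phi_j$ as a well-founded tree whose terminal nodes carry the witnesses $\langle a,b,c\rangle$ that $\Gamma$ (or $\Delta$) would output, then propagates labels upward to locate a path along which the needed forcing of $\Phi^f$-limits and the diagonalization on $f$ can be made compatible. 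A finite-injury bookkeeping alone does not resolve this conflict.
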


The proofs of the two main theorems will appear later on in their own sections. In section 2 we will develop the combinatorial tool that will be central to those proofs. In section 3 we will prove the first main theorem, and in section 4 we will prove the second main theorem. Section 5 summarizes the results of this paper and, by combining them with the two results of~\cite{combinatorialprinciples} cited above, presents a complete analysis of $\D$, $\SIPT$, $\SPT$, and $\SRT$ in terms of computable and Weihrauch reductions.

\section{Tree Labeling}

To prove the main theorems we will use a pair of \textit{tree labeling} arguments. The tree labeling method was first introduced by Dzhafarov in~\cite{combinatorialprinciples} to prove that $\mathsf{COH}\not\le_{\rm sc}\mathsf{SRT}^2_2$, and has since been used by Dzhafarov, Patey, Solomon, and Westrick in~\cite{etal} to prove that $\mathsf{COH}\not\le_{\rm sc}\mathsf{SRT}^2_{<\infty}$ and that, for $k>l$, $\mathsf{RT}^1_k\not\le_{\rm sc}\mathsf{SRT}^2_l$. The tree labeling arguments of this paper elaborate on the previous methods and make use of the following definitions. 

\begin{definition} $~$
\begin{itemize} 
	\item If $\alpha$ is a nonempty string of natural numbers, then $\alpha^{\#}$ denotes $\alpha\upharpoonright |\alpha|-1$. 
	\item If $\alpha$ and $\beta$ are strings of natural numbers, then $\alpha * \beta$ denotes $\alpha$ concatenated by $\beta$. 
	\item If $\alpha$ is a string of natural numbers and $x\in\omega$, then $\alpha * x$ denotes $\alpha * \langle x \rangle$.
	\item If $\alpha,\alpha*x\in T$ for some tree $T$, we say that $\alpha*x$ is a \textit{successor} of $\alpha$ in $T$. 
	\item If $A,B\subseteq\omega$, we write $A < B$ if every element of $A$ is less than every element of $B$. 
\end{itemize} 
\end{definition}

We now proceed define \textit{tree labeling with two labels}. 

\begin{definition} 
Let $H\subset\omega$ be finite and $I\subset\omega$ be infinite, with $H < I$. Let $\Gamma$ be a Turing functional. Finally, let $k\in\omega$. We define the tree $T(k,\Gamma,H,I)\subseteq I^{<\omega}$ by $\emptyset\in T(k,\Gamma,H,I)$ and for a nonempty string $\alpha$, $\alpha\in T(k,\Gamma,H,I)$ if $\alpha\in I^{<\omega}$ is increasing and there are no finite $F_L,F_R\subseteq\ran(\alpha^{\#})$ and no $b > a \ge k$ such that 
$$\Gamma^{H\cup (F_L\oplus F_R)}(a)\downarrow = \Gamma^{H\cup (F_L\oplus F_R)}(b)\downarrow = 1.$$
%where $\pi(a_i) = 2a_i+1$ if $i$ is odd and $\pi(a_i) = 2a_i$ if $i$ is even. 
\end{definition}

Here the finite sets $F_L,F_R$ represent additions to be made to the left and right columns, respectively, of a $p$-homogeneous set in our later constructions while $H$ represents the initial segment of a $p$-homogeneous set built by some stage of a construction. 

\begin{remark}
$T=T(k,\Gamma,H,I)$ has the following three properties: 
\begin{itemize} 
	\item If $T$ is not well-founded and $P$ is any infinite path through $T$, then $\ran(P)\subseteq I$ is infinite and for all $F_L,F_R\subseteq\ran(P)$ and all $b > a \ge k$, 
	$$\Gamma^{H\cup(F_L\oplus F_R)}(a) \simeq \Gamma^{H\cup(F_L\oplus F_R)}(b) \simeq 0.$$ 
	\item If $\alpha\in T$, then if $\alpha$ is not terminal and $\ran(\alpha) < x\in I$, $\alpha * x\in T$. 
	\item If $\alpha\in T$ is terminal, then 
	$$\Gamma^{H\cup (F_L\oplus F_R)}(a)\downarrow = \Gamma^{H\cup (F_L\oplus F_R)}(b)\downarrow = 1$$
	for some $F_L,F_R\subseteq\ran(\alpha)$ and some $b > a \ge k$. %Such $F_L,F_R$ are called a \textit{pair of columns} for $\alpha$.
\end{itemize} 
\end{remark}

%If $\alpha\in T$ and $\alpha*x\in T$, we say that $\alpha*x$ is a \textit{successor} of $\alpha$. 

\begin{definition} 
When $T = T(k,\Gamma,H,I)$ is well-founded, we \textit{label} the nodes of $T$ recursively, starting at the terminal nodes. Each node is labeled with an ordered pair whose elements may be natural numbers or the symbol $\infty$. 
\begin{itemize} 
	\item If $\alpha\in T$ is terminal, we label $\alpha$ with the least pair $\langle a,b \rangle$ of elements $b > a \ge k$ such that 
	$$\Gamma^{H\cup (F_L\oplus F_R)}(a)\downarrow = \Gamma^{H\cup (F_L\oplus F_R)}(b)\downarrow = 1$$ 
	for some $F_L,F_R\subseteq\ran(\alpha^{\#})$. 
	
	\item If $\alpha\in T$ is not terminal, then we determine the label of $\alpha$ starting with the second element as follows: 
	\begin{itemize} 
		\item If there is any $b\in\omega$ such that infinitely many of the successors of $\alpha$ have labels with second element $b$, then we let the least such $b$ be the second element of the label of $\alpha$. Otherwise, we let $\infty$ be the second element of the label of $\alpha$. 
		\item Now suppose the second element $b$ of the label of $\alpha$ has been determined already; we will determine its first element $a$ according as $b\in\omega$ or $b=\infty$. If $b=\infty$, then we let $a$ be the least finite number appearing as the first element of the label of infinitely many successors of $\alpha$, or else if there is no such finite number we let $a = \infty$. If $b\in\omega$, we restrict our attention to just those successors of $\alpha$ whose labels' second element is $b$, and let $a$ be the least number appearing as the first element of the label of infinitely many successors of $\alpha$. Observe that $a < b$. 
	\end{itemize} 
\end{itemize}
Note that no label which has the symbol $\infty$ as its first element has a finite number as its second. 
\end{definition}

% - Definition: Labeled Subtree 
\begin{definition} 
Suppose $T = T(k,\Gamma,H,I)$ is well-founded. Then the \textit{labeled subtree} $T^L = T^L(k,\Gamma,H,I)$ of $T$ is obtained from $T$ as follows. First, the root node of $T$ (namely $\emptyset$) is added to $T^L$. Now suppose we have added to $T^L$ some non-terminal node $\alpha$ of $T$. We then add to $T^L$ some of the successors of $\alpha$, thus: 
\begin{itemize} 
	\item If $\alpha$ has label $\langle a,b \rangle\in\omega^2$ in $T$, then we add to $T^L$ all those successors of $\alpha$ with the same label. 
	\item If $\alpha$ has label $\langle a,\infty \rangle$ for some $a\in\omega$ in $T$, then if infinitely many successors of $\alpha$ have label $\langle a,b \rangle\in\omega^2$, then for each $b\in\omega$ such that $\langle a,b \rangle$ appears as the label of a successor of $\alpha$, we select the least $x$ such that $\alpha * x$ has that label and add $\alpha * x$ to $T^L$; and if on the other hand cofinitely many successors of $\alpha$ have the same label (namely $\langle a, \infty \rangle$), we add all such successors to $T^L$.
	\item Otherwise, if $\alpha$ has label $\langle \infty,\infty \rangle$, then if infinitely many successors of $\alpha$ have label $\langle a,b \rangle\in\omega^2$, then for each such pair that appears as the label of a successor of $\alpha$, we select the least $x$ such that $\alpha * x$ has that label and add $\alpha * x$ to $T^L$; and if cofinitely many successors of $\alpha$ have label $\langle a,\infty$ for $a\in\omega$, then for each $a\in\omega$ such that $\langle a,\infty \rangle$ appears as the label of a successor of $\alpha$, we select the least $x$ such that $\alpha * x$ has that label and add $\alpha * x$ to $T^L$; and otherwise if cofinitely many successors of $\alpha$ have label $\langle \infty,\infty \rangle$, we add those successors to $T^L$. 
\end{itemize} 
All the nodes of $T^L$ retain the labels they had as nodes of $T$. Note that every node terminal in $T$ is also terminal in $T^L$, and that every non-terminal node in both $T$ and $T^L$ has infinitely many successors. 
\end{definition} 

\begin{definition} 
A node $\alpha\in T^L$ is called a \textit{transition node} if the symbol $\infty$ appears in the label of $\alpha$, and appears strictly fewer times in the label of each successor of $\alpha$. 
\end{definition}

% -------------------------------------------
% ------ MAIN THEOREMS ----------------------
% -------------------------------------------

\section{First Main Theorem}

% -- Main Theorem 1 (SRT vs SPT)

\begin{theorem}
{\it There exists a stable 2-coloring of pairs $f:[\omega]^2\to 2$ and a family $Y$ of infinite sets such that no $(f\oplus P)$-computable set is homogeneous for $f$ for any $P\in Y$, and every stable 2-coloring of pairs $f':[\omega]^2\to 2$ computable from $f$ has either an $(f\oplus P)$-computable $p$-homogeneous set for some $P\in Y$, or if not then some $p$-homogeneous set which does not compute a set homogeneous for $f$. }
\end{theorem}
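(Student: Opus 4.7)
The plan is to construct $f$ and $Y$ by a stage-wise forcing argument. Conditions are tuples $(\sigma, H, I)$ where $\sigma$ is a finite partial approximation to $f$, $H = H_L \oplus H_R$ is a finite initial segment of a candidate $p$-homogeneous set with $H_L < H_R < I$, and $I$ is an infinite reservoir disjoint from $H$. Enumerate all pairs $(\Phi_e, \Gamma_j)$ of Turing functionals and, at stage $s$, address the pair $(\Phi_{e_s}, \Gamma_{j_s})$, aiming to defeat $\Gamma_{j_s}$ as a potential witness to $\SRT \le_{\rm sc} \SPT$ via $\Phi_{e_s}$ applied to $f$.

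At stage $s$, form the tree $T_s = T(k_s, \Gamma_{j_s}, H_s, I_s)$ with $k_s := \max H_s + 1$. If $T_s$ is ill-founded, pick any infinite path $P$; by Remark 2.3, every further extension of $H_s$ by finite subsets of $\ran(P)$ keeps $\Gamma_{j_s}$'s output constant on values $\ge k_s$, so the resulting $\Gamma_{j_s}^Q$ has only finitely many $1$'s. Commit further $\sigma$-values so that $f' = \Phi_{e_s}^f$ is stable on $\ran(P)$ and an $(f \oplus P)$-computable $p$-homogeneous set for $f'$ can be extracted from limit-homogeneous information inside $\ran(P)$ (in the spirit of Lemma 1.6); add $P$ to $Y$ and set $I_{s+1} := \ran(P)$. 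If $T_s$ is well-founded, descend through the labeled subtree $T_s^L$ of Definition 2.5 along transition nodes; the branching structure at a transition node provides incompatible extensions $F_L \oplus F_R$ and $F'_L \oplus F'_R$ of $H_s$ together with distinct pairs $(a,b), (a',b')$ such that $\Gamma_{j_s}^{H_s \cup (F_L \oplus F_R)}$ outputs $1$ on both $a$ and $b$, and analogously for the primed data. Commit $f(a,b) = 0$ and $f(a',b') = 1$ in $\sigma_{s+1}$, so that any candidate homogeneous set containing both pairs must contain pairs of both colors; then extend $H_{s+1}$ by $F_L \cup F'_L$ on the left and $F_R \cup F'_R$ on the right.

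The main obstacle is the well-founded case, where one must verify, via combinatorial analysis of $T_s^L$, that the transition-node branching provides extensions simultaneously compatible with the required $f$-commitments and with preservation of prior $\Gamma_{j_i}^Q$-values for $i < s$. This relies on the infinite-successor property of non-terminal nodes in $T_s^L$ from Definition 2.5, which provides room to coordinate each commitment with the finite tail of $\sigma_s$; additionally, the bounded-use property of Turing functionals means that preserving an output $\Gamma_{j_s}^{H_s \cup (F_L \oplus F_R)}(a) = 1$ in the eventual $Q$ reduces to ensuring $Q$ restricts correctly on the use, which is arranged by further thinning $I_{s+1}$. Condition (1) of the theorem---that no $(f \oplus P)$-computable set for $P \in Y$ is homogeneous for $f$---is arranged by interleaving a cone-avoidance routine at each stage: after adding $P$ to $Y$, we further restrict $I_{s+1}$ and commit $\sigma$-values to defeat a fixed Turing functional $\Psi_{k_s}$ as a candidate witness producing a homogeneous set from $f \oplus P$, using that a generic restriction of the unfixed portion of $f$ within $I_{s+1}$ suffices to block such a witness.
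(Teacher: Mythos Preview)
Your proposal has a genuine gap at the heart of the argument: it does not address the central tension between diagonalizing against $f$-homogeneity and maintaining $\Phi^f$-$p$-homogeneity of the set $H$ you are building. When you extend $H$ by some $F_L \oplus F_R \subseteq \ran(\alpha)$, the extension must remain $p$-homogeneous for $\Phi^f$; since $H$ carries a fixed color $j$, this requires $\lim_u \Phi^f(x,u) = j$ for each $x \in F_L \cup F_R$. But $\Phi^f$ depends on $f$, and the only way to force such limits is by committing $\sigma$-values---the very same values you are committing in order to diagonalize. Your proposal never explains why these two commitments are compatible. The paper's proof is largely devoted to resolving this conflict: it builds \emph{two} sets $H^\Phi_0, H^\Phi_1$ (one per color), handles functionals in pairs $(\Gamma,\Delta)$ via requirements $\mathcal{R}^\Phi_{\Gamma,\Delta}$, and splits into Case A/Case B according to whether there exist ``buttons''---triples $\langle x, a_x, b_x\rangle$ such that any condition making $\sigma(a_x,b_x), (l(a_x))_0, (l(b_x))_0$ not all equal automatically forces $\lim_u \Phi^f(x,u) = i$. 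In Case B one switches to building $H^\Phi_i$ and presses buttons to force correct limits while descending; Case B also requires three-label trees and a ``share a column'' analysis to avoid having the diagonalization at $(a^*,b^*)$ destroy a previously pressed button. In Case A, if at some node no extension forces the needed limit, the successors form a set limit-homogeneous for $\Phi^f$, which is added to $Y$. None of this machinery appears in your sketch, and a single-$H$ construction with no button mechanism cannot guarantee that $F_L \cup F_R$ is color-compatible with $H$.

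Your diagonalization step is also flawed as written. You take two extensions $F_L\oplus F_R$ and $F'_L\oplus F'_R$ with output pairs $(a,b)$ and $(a',b')$, commit $f(a,b)=0$, $f(a',b')=1$, and extend $H$ by the union. But combining two oracles does not preserve either computation: $\Gamma^{H \cup ((F_L\cup F'_L)\oplus(F_R\cup F'_R))}$ need not output any of $a,b,a',b'$, so nothing is defeated. The paper diagonalizes with a \emph{single} pair by forcing $\sigma^q(a,b)$, $(l^q(a))_0$, $(l^q(b))_0$ not all equal; since any infinite homogeneous set containing $a,b$ would force all three to agree, this alone blocks homogeneity and requires only one extension of $H$. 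Note that this is why the forcing conditions must carry the limit-commitment function $l^p$, which your conditions $(\sigma,H,I)$ omit.
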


%Before proving Theorem 3.1, we note how the First Main Theorem is a direct consequence. Let $f$ and $Y$ be as in the statement of Theorem 3.1 and let $\Phi$ be any Turing functional. Suppose that $\Phi^f$ is a stable 2-coloring of pairs. Then by the theorem, there are two possible cases. (1) $\Phi^f$ has a $p$-homogeneous set $H$ which does not compute a set homogeneous for $f$. Then we have a counterexample showing that $\mathsf{SRT}^2_2$ does not strongly computably reduce to $\mathsf{SPT}^2_2$. (2) $\Phi^f$ has an $(f\oplus P)$-computable $p$-homogeneous set $H$ for some $P\in Y$. Since $f$ has the property that no $(f\oplus P)$-computable set is homogeneous for $f$, it follows that $H$ does not compute any set homogeneous for $f$. So again we have a counterexample showing that $\mathsf{SRT}^2_2$ does not strongly computably reduce to $\mathsf{SPT}^2_2$. 

Before proving Theorem 3.1, we observe how the First Main Theorem is a direct consequence. Let $f$ and $Y$ be as in the statement of Theorem 3.1, let $\Phi,\Psi$ be any Turing functionals, and suppose that $\Phi^f$ is a stable 2-coloring of pairs. Let $H$ be set $p$-homogeneous for $\Phi^f$. Then by Theorem 3.1, $\Psi^H$ is not homogeneous for $f$. Thus $\mathsf{SRT}^2_2\not\le_{\rm sc}\mathsf{SPT}^2_2$. 

We define the following notion of forcing which we shall need to prove each of the main theorems. 

\begin{definition}
Let $\mathbb{C}$ denote the following notion of forcing. A condition is an ordered triple $p=\langle \sigma^p,l^p,|p| \rangle$ where $|p|\in\omega$, $\sigma^p:\left[|p|\right]^2\to 2$, $l^p:|p|\to 2\times\omega$, and $l^p(x) = \langle i,z \rangle$ implies that if $\sigma^p(x,y)$ is defined and $y\ge z$ then $\sigma^p(x,y) = i$. 
\end{definition}

From any sufficiently generic filter $\mathcal{G}$ for $\mathbb{C}$ we obtain a stable 2-coloring of pairs $f = \bigcup_{p\in \mathcal{G}}\sigma^p:[\omega]^2\to 2$ together with a function $l = \bigcup_{p\in\mathcal{G}}l^p:\omega\to\omega\times 2$ such that for each $x\in\omega$, $\lim_u f(x,u) = (l(x))_0$. 

\begin{lemma} 
If $p_0\ge p_1\ge p_2\ge \cdots$ is a sequence of $\mathbb{C}$-conditions which is 3-generic relative to some set $P\subseteq\omega$, and if $f = \bigcup_s \sigma^{p_s}:[\omega]^2\to 2$, then $f\oplus P$ does not compute a homogeneous set for $f$. 
\end{lemma}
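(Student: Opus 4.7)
My plan is to fix an arbitrary Turing functional $\Gamma$ and show that the set $D_\Gamma$ of conditions $q$ that either force $\Gamma^{f\oplus P}$ to be contained in some fixed initial segment, or provide a witness for two forced elements with distinct limit colors, is $\Sigma^0_3(P)$-dense in $\mathbb{C}$. Since any infinite set homogeneous for a stable coloring $f$ consists entirely of elements sharing one limit color, any condition in $D_\Gamma$ forces $\Gamma^{f\oplus P}$ to fail to be an infinite $f$-homogeneous set. Because there are only countably many Turing functionals and the sequence $p_0 \ge p_1 \ge \cdots$ is 3-generic relative to $P$, the sequence will meet every such $D_\Gamma$, establishing the conclusion.

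Given $p \in \mathbb{C}$ and $\Gamma$, the density argument divides into two cases. In the bounded case, there exists $q \le p$ and $m \in \omega$ such that no extension $r \le q$ has $\Gamma^{\sigma^r \oplus P}(n) \downarrow = 1$ for any $n \ge m$; then $q$ forces $\Gamma^{f \oplus P} \subseteq [0, m)$, so $q \in D_\Gamma$. In the unbounded case, for every $q \le p$ and every threshold $m$, some $r \le q$ and some $n \ge m$ satisfy $\Gamma^{\sigma^r \oplus P}(n) \downarrow = 1$. The main technical work takes place in the unbounded case.

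In the unbounded case I apply the hypothesis twice. First, to $(p, |p|)$, obtaining $r_0 \le p$ and $n_0 \ge |p|$ with $\Gamma^{\sigma^{r_0} \oplus P}(n_0) \downarrow = 1$, and setting $i_0 = l^{r_0}(n_0)_0$. Second, to $(r_0, |r_0|)$, obtaining $r_1 \le r_0$ and $n_1 \ge |r_0|$ with $\Gamma^{\sigma^{r_1} \oplus P}(n_1) \downarrow = 1$. The choice of threshold $|r_0|$ in the second application is essential: it guarantees $l^{r_0}(n_1)$ is undefined, so nothing committed at level $r_0$ constrains the limit color of $n_1$. I then construct $r_1^*$ from $r_1$ by overwriting $l^{r_1}(n_1)$ with $\langle 1 - i_0, |r_1| \rangle$ while leaving $\sigma^{r_1}$ and the other values of $l^{r_1}$ unchanged.

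The step I expect to require the most care is verifying that $r_1^*$ is a valid condition of $\mathbb{C}$ below $r_0$, rather than merely an ad hoc modification of $r_1$. For internal consistency of $r_1^*$, the coloring rule attached to $l^{r_1^*}(n_1) = \langle 1 - i_0, |r_1| \rangle$ demands $\sigma^{r_1^*}(n_1, y) = 1 - i_0$ only for $y \ge |r_1|$, but within the domain of $\sigma^{r_1^*}$, namely $[|r_1|]^2$, no such $y$ occurs, so the constraint is vacuous. For $r_1^* \le r_0$, the only changed coordinate of $l$ is at $n_1$, where $l^{r_0}$ was undefined. Once these checks are in place, $r_1^*$ forces $n_0, n_1 \in \Gamma^{f \oplus P}$ because $\sigma^{r_1^*} = \sigma^{r_1} \supseteq \sigma^{r_0}$ preserves both convergences, and it arranges $l^{r_1^*}(n_0)_0 = i_0 \ne 1 - i_0 = l^{r_1^*}(n_1)_0$, placing $r_1^*$ in $D_\Gamma$. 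The bounded disjunct of $D_\Gamma$ is $\Sigma^0_2(P)$ and the limit-color-disagreement disjunct is $\Sigma^0_1(P)$, so $D_\Gamma$ is well within $\Sigma^0_3(P)$, and the 3-generic sequence relative to $P$ supplies the needed witness.
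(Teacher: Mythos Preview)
Your proof is correct and follows essentially the same approach as the paper's: both define a $\Sigma^0_3(P)$-dense set of conditions forcing either that $\Gamma^{f\oplus P}$ fails to be infinite or that it contains two elements with distinct limit colors, and both establish density by the same bounded/unbounded dichotomy, with the paper simply noting that the choice of limit colors is independent of the $\Sigma^0_1$ convergence facts while you make this explicit via the two-step construction of $r_1^*$. One trivial point worth stating: after obtaining $(r_0,n_0)$ and $(r_1,n_1)$ you should pass to extensions with $n_0<|r_0|$ and $n_1<|r_1|$, so that $l^{r_0}(n_0)$ is actually defined and your overwrite of $l^{r_1}(n_1)$ (with $\sigma^{r_1^*}=\sigma^{r_1}$ and $|r_1^*|=|r_1|$) yields a legitimate condition.
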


\begin{proof}[Proof of Lemma 3.3]Fix $P\subseteq\omega$ such that $p_0\ge p_1\ge p_2\ge\cdots$ is 3-generic with respect to $P$ and fix a Turing functional $\Gamma$. Let $W_{P,\Gamma}$ be the set of all conditions $p$ which force one of the following two statements:

\begin{center}\parbox{5.5in}{
(a) $\Gamma^{f\oplus P}$ does not define an infinite set; \\[3pt] 
(b) there are $x,y\in\omega$ with $\Gamma^{f\oplus P}(x)\downarrow = \Gamma^{f\oplus P}(y)\downarrow = 1$ and $\lim_u f(x,u)\ne\lim_u f(y,u)$. 
}\end{center}

$W_{P,\Gamma}$ is $\Sigma^0_3$-definable in $P$, and we claim that $W_{P,\Gamma}$ is dense in $\mathbb{C}$. To see this, let $p$ be any condition none of whose extensions force (a), and suppose by way of contradiction that no extension of $p$ forces (b). This means that for every $x,y\in\omega$, if $q\le p$ then $q$ does not force both that $\lim_u f(x,u)\ne \lim_u f(y,u)$ and that $\Gamma^{f\oplus P}(x)\downarrow = \Gamma^{f\oplus P}(y)\downarrow = 1$. Then in particular there are no $x,y\ge |p|$, $\tau$ extending $\sigma^p$, and $L$ extending $l^p$ such that

\begin{center}\parbox{2.5in}{
(1) $\tau$ respects $L$; \\[3pt] 
(2) $\Gamma^{\tau \oplus P \upharpoonright |p|}(x)\downarrow = \Gamma^{\tau \oplus P \upharpoonright |p|}(y)\downarrow=1$; and \\[3pt] 
(3) $(L(x))_0\ne (L(y))_0$. 
}\end{center}

Now if there is no such $x,y,\tau,L$ satisfying (1) and (2), then $p$ forces that $\Gamma^{f\oplus P}$ does not define an infinite set, which is a contradiction. Therefore there are $x,y,\tau,L$ satisfying (1) and (2). But (3) is independent of (1) and (2); given $x,y,\tau$ satisfying (1) and (2) we may find $L$ extending $l^p$ and compatible with $\tau$ such that $(L(x))_0\ne (L(y))_0$. We conclude that $W$ is dense in $\mathbb{C}$.
\end{proof}

% -------------------------------------------------
% -------------- Proof of Theorem -----------------
% -------------------------------------------------

\begin{proof}[Proof of Theorem 3.1]
We build
\begin{itemize} 
\item a sequence of $\mathbb{C}$-conditions $p_0\ge p_1\ge p_2\ge\cdots$ with $\lim_s |p_s| = \infty$; 
\item sequences of finite sets (initial segments of $p$-homogeneous sets) $H^\Phi_{j,0}\subseteq H^\Phi_{j,1}\subseteq H^\Phi_{j,2}\subseteq\cdots$ for each Turing functional $\Phi$ and each $j<2$; 
\item a sequence of infinite sets (reservoirs) $I_0\supseteq I_1\supseteq I_2\supseteq\cdots$ with $H^\Phi_{j,s} < I_s$ for each $\Phi,j,s$; 
\item a sequence of finite families $Y_0\subseteq Y_1\subseteq Y_2\subseteq\cdots$ of infinite subsets of $\omega$; 
\end{itemize} 
and we define $f = \bigcup_s \sigma^{p_s}$, $H^\Phi_j = \bigcup_s H^\Phi_{j,s}$ for each $j<2$, and $Y = \bigcup_s Y_s$. The construction will ensure the following requirements, for all $i\in\omega$ and Turing functionals $\Phi,\Gamma,\Delta$: 

\hspace*{9mm}\begin{center}\begin{tabular}{lcl} 
$\mathcal{P}_i$ & : & the sequence $p_0\ge p_1\ge p_2\ge \cdots$ is 3-generic relative to each $P\in Y_i$; \\ 
& & \\
$\mathcal{Q}_{\Phi,i}$ & : & if $\Phi^f$ is a stable 2-coloring of pairs, it either has an $(f\oplus P)$-computable \\ 
& &  $p$-homogeneous set for some $P\in Y$ or else both $H^\Phi_0$ and $H^\Phi_1$ are infinite \\ & & in both columns (in other words, given our encoding of $p$-homogeneous \\ & & sets, contain infinitely many even and infinitely many odd numbers); \\ 
& & \\
$\mathcal{R}^\Phi_{\Gamma,\Delta}$ & : & if $\Phi^f$ is a stable 2-coloring of pairs, it either has an $(f\oplus P)$-computable \\ 
& & $p$-homogeneous set for some $P\in Y$; or else if $\Gamma^{H^\Phi_0}$ defines an infinite set \\ 
& & then this set is not homogeneous for $f$; or else if $\Delta^{H^\Phi_1}$ defines an infinite \\ 
& & set then this set is not homogeneous for $f$. 
%$\mathcal{S}^\Phi_j$ & : & 
\end{tabular}\end{center}  $~$

By way of explaining the $\mathcal{Q}$ and $\mathcal{R}$ requirements, note that by Lemma 3.3, if there is any sequence satisfying the $\mathcal{P}$ requirements and such that for some $P\in Y$ $f\oplus P$ computes a set $p$-homogeneous for $\Phi^f$, there will then be a set which is $p$-homogeneous for $\Phi^f$ but which computes no set homogeneous for $f$. 

\underline{\it Construction.} 

Devote infinitely many stages $s\in\omega$ to each requirement. Let $p_0$ be any condition with $|p_0| = 0$. For each $\Phi$ let $H^\Phi_{0,0} = H^\Phi_{1,0} = \emptyset$, and let $I_0 = \omega$ and $Y_0 = \emptyset$. At stage $s+1$ assume by way of induction that we have $p_s$, $H^\Phi_{j,s}$ for $j<2$ and all $\Phi$, $I_s$, and $Y_s$ and assume that if $H^\Phi_{j,s}$ is nonempty for some $j$ and $\Phi$, then $p_s$ forces that $\Phi^f$ is a stable coloring of pairs and that $\Phi^f(x,y) = j$ whenever $2x,2y+1\in H^\Phi_{j,s}$ or when $y\in I_s$ and either $2x\in H^\Phi_{j,s}$ or $2x+1\in H^\Phi_{j,s}$. At the end of a stage any of $p_{s+1}$, $H^\Phi_{j,s+1}$, $I_{s+1}$, or $Y_{s+1}$ not yet defined should be taken to be identical to $p_s$, $H^\Phi_{j,s}$, $I_{s}$, or $Y_s$, respectively. 

%----------------------
%----P requirements----
%----------------------

\subsection*{$\mathcal{P}$ requirements.} Suppose $s$ is dedicated to requirement $\mathcal{P}_i$ for some $i < s$ and that it is the $\langle n,m \rangle^{\rm th}$ such stage. If $n > |Y_i|$ do nothing. Otherwise, let $P$ be the $n^{\rm th}$ member of the family $Y_i$ in some fixed enumeration and let $W$ be the $m^{\rm th}$ $\Sigma^0_3(P)$ set in some fixed enumeration. If $p_s$ has an extension $q$ in $W$, fix $q$ and let $p_{s+1} = q$, so that $p_0\ge p_1\ge \cdots \ge p_s\ge p_{s+1}\ge \cdots$ meets $W$. Otherwise, do nothing, and $p_0\ge p_1\ge \cdots \ge p_s\ge p_{s+1}\ge \cdots$ avoids $W$. 

%----------------------
%----Q requirements----
%----------------------

\subsection*{$\mathcal{Q}$ requirements.} Suppose $s$ is dedicated to $\mathcal{Q}_{\Phi,i}$. Without loss of generality we assume that $p_s$ decides whether or not $\Phi^f$ is a stable 2-coloring of pairs. If $p_s$ forces that $\Phi^f$ is not such a coloring, do nothing. Otherwise, we consider two cases. 
\begin{itemize} 
\item If for some $j < 2$ and $k\in\omega$ there is no extension of $p_s$ which forces that $\lim_u\Phi^f(x,u) = j$ for some $x\ge k$ in $I_s$, then $P = \{x\in I_s:x\ge k\}$ is limit homogeneous for $\Phi^f$ with color $1-j$, and so $(f\oplus P)$ computes a set $p$-homogeneous for $\Phi^f$. We set $Y_{s+1} = Y_s\cup\{P\}$. This satisfies the requirement (since, as remarked earlier, this means there is a set which is $p$-homogeneous for $\Phi^f$ but which computes no set homogeneous for $f$). 

\item If no such $j,k$ exist, then there are numbers $x_{00},x_{01},x_{10},x_{11}\in I_s$ and an extension of $p_s$ forcing that $H^\Phi_{j,s}\cup\{2x_{j0},2x_{j1}+1\}$ is finite $p$-homogeneous for $\Phi^f$ and $\lim_u\Phi^f(x_{ji},u) = j$ for each $i,j<2$. In this case let $p_{s+1}$ be such an extension of $p_s$, let $H^\Phi_{j,s+1} = H^\Phi_{j,s}\cup\{2x_{j0},2x_{j1}+1\}$, and let $I_{s+1} = \{x:\,m<x\in I_s\}$ where $m$ is greater than the stabilization points under $\Phi^f$ of every element of $H^\Phi_{0,s+1}\cup H^\Phi_{1,s+1}$. Observe that both columns have been extended by one element. 
\end{itemize}

%----------------------
%----R requirements----
%----------------------

\subsection*{$\mathcal{R}$ requirements.} Suppose $s$ is dedicated to $\mathcal{R}^{\Phi}_{\Gamma,\Delta}$ and assume that $p_s$ forces that $\Phi^f$ is a stable coloring of pairs. The goal of this requirement is to extend by some finite set either the initial segment of $H^\Phi_0$ or the initial segment of $H^\Phi_1$ that we have constructed so far, subject to the following condition: if we extend $H^\Phi_0$, then $\Gamma$ does not compute a homogeneous set for $f$ from any further extension of $H^\Phi_0$; and if on the other hand we extend $H^\Phi_1$, then $\Delta$ does not compute a homogeneous set for $f$ from any further extension of $H^\Phi_1$. When we have so extended one of $H^\Phi_0,H^\Phi_1$, we will say that we have successfully \textit{diagonalized} against such computations. 

We intend to accomplish this diagonalization in the following way. There will be two numbers, say $a$ and $b$, and two finite sets $F_L$ and $F_R$, all arising from a tree labeling construction, about which we know either that 
$$\Gamma^{H^\Phi_0\cup (F_L\oplus F_R)}(a)\downarrow = \Gamma^{H^\Phi_0\cup (F_L\oplus F_R)}(b)\downarrow = 1$$ 
or else that 
$$\Delta^{H^\Phi_1\cup (F_L\oplus F_R)}(a)\downarrow = \Delta^{H^\Phi_1\cup (F_L\oplus F_R)}(b)\downarrow = 1.$$ 
Let us suppose we know the first. In other words, we know that if we extend $H^\Phi_0$ by $F_L\oplus F_R$, then the set computed from $H^\Phi_0\cup(F_L\oplus F_R)$ by $\Gamma$ will contain $a$ and $b$. Thus to diagonalize---i.e. to ensure that the set so computed is not homogeneous for $f$---we will choose an extension $q$ of the condition $p_s$ such that $\sigma^q(a,b)$, $(l^q(a))_0$, and $(l^q(b))_0$ are not all equal. This will guarantee that no set containing both $a$ and $b$ is homogeneous for the coloring eventually obtained by extending $\sigma^q$. The tension of the proof arises from the fact that, while diagonalizing in this way, we also need $q$ to force that elements of $F_L$ and $F_R$ have the right limits under $\Phi^f$ so that $H^\Phi_0\cup(F_L\oplus F_R)$ can in fact be extended to a $p$-homogeneous set. 

The construction divides into two cases, \textbf{Case B} and \textbf{Case A}. In Case B we suppose there is an $i<2$, a condition $q\le p_s$, an infinite set $I\subseteq I_s$, and a set $Q = \{\langle x,a_x,b_x \rangle: x\in I,\,a_x,b_x\in\omega\}$ such that there is exactly one triple in $Q$ with first element $x$ for each $x\in I$, $a_x < b_x$, and such that infinitely many numbers $b_x\in\omega$ appear as third elements of triples in $Q$; and we suppose these have the following property: for any $x\in I$ and any extension $r$ of $q$ such that $\sigma^r(a_x,b_x)$, $(l^r(a_x))_0$, and $(l^r(b_x))_0$ are not all equal will force that $\lim_u \Phi^f(x,u) = i$. In Case A we suppose this is not so. The reason for choosing this way of dividing the cases will become apparent later. Case B may be regarded as the difficult case, where the real heavy lifting of the proof takes place; Case A may be regarded as the relatively easy case. 

%\medskip
%\begin{center}\textbf{Case A}\end{center} 
%\medskip 
\subsection*{Case A}

Let $T_0 = T(|p_s|,\Gamma,H^\Phi_{0,s},I_s)$. If $T_0$ is not well-founded then let $I_{s+1}$ be the range of an infinite path through $T_0$. Observe that in this case the requirement is satisfied. If $T_0$ is well-founded, then let $T^L_0$ be the labeled subtree of $T_0$. 

We now try to define two sequences, conditions 
$$p_s\ge q_0 \ge q_1 \ge q_2 \ge \cdots$$ 
and nodes of $T^L_0$ 
$$\emptyset = \alpha_0 \preceq \alpha_1 \preceq \alpha_2 \preceq \cdots$$ 
where for all $j\ge 0$, $\alpha_{j+1}$ is a successor of $\alpha_j$ and for all $j\ge 0$ the condition $q_j$ forces that 
$$\lim_u \Phi^f(x,u) = 0$$ 
for all $x\in\ran(\alpha_j)$. We begin the definition of these sequences as follows. 

\begin{itemize} 
\item If $\emptyset = \alpha_0$ has label $\langle a,b \rangle$ with $a,b\in\omega$, let $q_0$ be any extension of $p_s$ having $\sigma^{q_0}(a,b)$, $(l^{q_0}(a))_0$, and $(l^{q_0}(b))_0$ not all equal. In this case the diagonalization for the present requirement is now complete. 
\item Otherwise, let $q_0 = p_s$. 
\end{itemize}

%We then proceed by induction. Suppose we have defined $q_n$ and $\alpha_n$ and that the latter is not terminal in $T^L_0$. Recall that $q_n$ forces that there is some $m\in\omega$ such that $\Phi^f(x,y) = 0$ for $x\in\ran(\alpha_n)$ and $y\ge m$. Let $S$ be the set of all successors $\alpha_n*x$ of $\alpha_n$ with $x\ge m$. The induction breaks into cases according as $\alpha_n$ is or is not a transition node, with diagonalization occurring at transition nodes. 

We then proceed by induction until either the induction fails and we satisfy the requirement by adding a certain set $P$ to $Y$, or else the induction always succeeds at every non-terminal node. In the latter case, once we reach a terminal node we will be ready to diagonalize . 
Suppose we have defined $q_n$ and $\alpha_n$ and that the latter is not terminal in $T^L_0$. Recall that $q_n$ forces that there is some $m\in\omega$ such that $\Phi^f(x,y) = 0$ for $x\in\ran(\alpha_n)$ and $y\ge m$. Let $S = S(n)$ be the set of all successors $\alpha_n*x$ of $\alpha_n$ with $x\ge m$. The induction breaks into cases according as $\alpha_n$ is or is not a transition node. At the first suitable transition node, we set up to diagonalize. 

\subsection*{Case A.1.} If $\alpha_n$ is not a transition node, let $P = \{x:\alpha_n*x\in S\}$. We look for an $x^*\in P$ and an extension $q$ of $q_n$ which forces that 
$$\lim_u \Phi^f(x^*,u) = 0.$$ 
If we find such, we let $q_{n+1} = q$ and we let $\alpha_{n+1}$ be any $\beta\in S$ with $\beta(n) = x^*$; if we find no such, then $P$ is limit-homogeneous for $\Phi^f$ and thus $f\oplus P$ computes a $p$-homogeneous set for $\Phi^f$ (for example, we can thin $P$ computably in $f$ to a set $G$ homogeneous for $\Phi^f$; then $G\oplus G$ is $p$-homogeneous for $\Phi^f$). In this case we set $Y_{s+1} = Y_s\cup\{P\}$ and $p_{s+1} = q_n$, satisfying the requirement and ending stage $s$.

\subsection*{Case A.2.} If $\alpha_n$ is a transition node, the induction breaks into two cases. The header for each case gives a shorthand for the sort of transition being
discussed.
\subsection*{Case A.2.1.} $\langle \infty,\infty \rangle \to \langle a,\infty \rangle$ \\ 

If $\alpha_n$ has label $\langle \infty,\infty \rangle$ and every successor of $\alpha_n$ has a label in which the symbol $\infty$ appears exactly once, then we proceed as in the non-transition case. 
\subsection*{Case A.2.2.} $\langle \infty,\infty \rangle \to \langle a,b \rangle$ or $\langle a,\infty \rangle \to \langle a,b \rangle$ \\ 

If $\alpha_n$ has a label in which the symbol $\infty$ appears but every successor of $\alpha_n$ is labeled only with finite numbers, 
then let 
$$P = \{\langle x, a, b \rangle:\alpha_n*x\in S\mbox{ and }\alpha_n*x\mbox{ has label }\langle a,b \rangle\mbox{ and }b > |q_n|\}.$$  
We look for a tuple $\langle x^*,a^*,b^* \rangle\in P$ and an extension $q$ of $q_n$ which forces that 
$$\lim_u \Phi^f(x^*,u) = 0$$ 
and is such that $\sigma^q(a^*,b^*)$, $(l^q(a^*))_0$, and $(l^q(b^*))_0$ are not all equal. Observe that, by the assumption that we are not in Case B, we must find such a tuple and extension. For otherwise fix $Q$ to be a set of triples $\langle x, a_x, b_x\rangle$ such that for every $b$ with $\langle y, a, b \rangle\in P$ for some $y$ and $a$, there is precisely one triple $\langle x, a_x, b_x\rangle$ in $Q$ with $b_x = b$ and let $I = \{x:\exists a_x,b_x\,(\langle x,a_x,b_x \rangle\in Q)\}$. Then we are in Case B with $i=1$ and $q=q_n$. Thus, select such an extension $q$ and let $q_{n+1} = q$ and we let $\alpha_{n+1}$ be any $\beta\in S$ with $\beta(n) = x^*$ and label $\langle a^*, b^* \rangle$. As $\alpha_{n+1}$ is not a transition node, we return to Case A.1.

\subsection*{Case B}

Fix an $i<2$, a condition $q\le p_s$, an infinite set $I\subseteq I_s$, and a set $Q = \{\langle x,a_x,b_x \rangle: x\in I,\,a_x,b_x\in\omega\}$ such that there is exactly one triple in $Q$ with first element $x$ for each $x\in I$; such that infinitely many numbers $b_x\in\omega$ appear as third elements of triples in $Q$; and such that for any $x\in I$ and any extension $r$ of $q$ such that $\sigma^r(a_x,b_x)$, $(l^r(a_x))_0$, and $(l^r(b_x))_0$ are not all equal will force that $\lim_u \Phi^f(x,u) = i$. Suppose without loss of generality that $i=1$. In the tree labeling construction which follows, we will extend $q$ several times. As we gradually extend the forcing condition, for any given $x$ it might occur---and we may as well assume---that the limiting color of $a_x$ will be forced, and that the stabilization point will fall below $b_x$ so that the color of $(a_x,b_x)$ will be forced also. However, since there are infinitely many different numbers $b_x$, by restricting our attention to those $x$'s for which the limiting color of $b_x$ has not yet been forced, we will always have the option of choosing an extension $q$ as above, i.e. such that $\sigma^q(a_x,b_x)$, $(l^q(a_x))_0$, and $(l^q(b_x))_0$ are not all equal, and thus force that $\lim_u \Phi^f(x,u) = 1$. The fact that we can always accomplish this while making progress toward satisfying the present requirement underlies the remainder of the construction, and so we find it useful to define a term describing extensions $q_n$ which accomplish this. 

\begin{definition}
A forcing extension $q$ is said to \textit{press the button of $x$} or to \textit{press $b_x$} if $\sigma^q(a_x,b_x)$, $(l^q(a_x))_0$, and $(l^q(b_x))_0$ are not all equal.
\end{definition}

\noindent Thus (assuming as above that $i=1$) any forcing extension which presses the button of $x$ forces that $\lim_u \Phi^f(x,u)=1$.

For this case we need to modify our definition of tree labeling. We will use \textit{tree labeling with three labels}. The definition of $T(k,\Gamma,H,I)$ is for this case changed to the following: $\emptyset \in T(k,\Gamma,H,I)$ and for a nonempty string $\alpha$, $\alpha\in T(k,\Gamma,H,I)$ if $\alpha\in I^{<\omega}$ is increasing and there are no finite $F_L,F_R\subseteq\ran(\alpha^{\#})$ and no $c > b > a \ge k$ such that 
$$\Gamma^{H\cup(F_L\oplus F_R)}(a)\downarrow = \Gamma^{H\cup(F_L\oplus F_R)}(b)\downarrow = \Gamma^{H\cup(F_L\oplus F_R)}(c)\downarrow = 1.$$
The method for labeling the nodes of $T(k,\Gamma,H,I)$ extends the method from tree labeling with two labels in the natural way, as does the method for selecting the nodes of the labeled subtree $T^L(k,\Gamma,H,I)$. Remark 2.3 applies \textit{mutatis mutandis}. Additionally, we revise the definition of a transition node: a node $\alpha\in T^L$ is called a transition node if the symbol $\infty$ appears in the label of $\alpha$, and appears strictly fewer times \textit{and no more than once} in the label of each successor of $\alpha$. 

Now let $T_1 = T(|q|,\Delta,H^{\Phi}_{1,s},I_s)$. If $T_1$ is not well founded then let $I_{s+1}$ be the range of an infinite path through $T_1$. Observe that in this case the requirement is satisfied. If $T_1$ is well founded, let $T^L_1$ be the labeled subtree of $T_1$.

We now try to define two sequences, conditions 
$$q \ge q_0 \ge q_1 \ge q_2 \ge \cdots$$ 
and nodes of $T^L_1$ 
$$\emptyset = \alpha_0 \preceq \alpha_1 \preceq \alpha_2 \preceq \cdots$$ 
where for all $j\ge 0$, $\alpha_{j+1}$ is a successor of $\alpha_j$ and for all $j\ge 0$ the condition $q_j$ forces that 
$$\lim_u \Phi^f(x,u) = 1$$ 
for all $x\in\ran(\alpha_j)$. 
We begin the definition of these sequences as follows. 

\begin{itemize} 
\item If $\emptyset = \alpha_0$ has label $\langle a,b,c \rangle$ with $a,b,c\in\omega$, let $q_0$ be any extension of $q$ having $\sigma^{q_0}(a,b)$, $(l^{q_0}(a))_0$, and $(l^{q_0}(b))_0$ not all equal. %In this case the diagonalization for the present requirement is now complete. 
\item If $\emptyset = \alpha_0$ has label $\langle a,b,\infty \rangle$ with $a,b\in\omega$, let $q_0$ be any extension of $q$ having $\sigma^{q_0}(a,b)$, $(l^{q_0}(a))_0$, and $(l^{q_0}(b))_0$ not all equal. %In this case the diagonalization for the present requirement is now complete. 
\item Otherwise, let $q_0 = q$. 
\end{itemize}

We then proceed by induction. Suppose we have defined $q_n$ and $\alpha_n$ and that the latter is not terminal in $T^L_1$. Recall that $q_n$ forces that there is some $m\in\omega$ such that $\Phi^f(x,y) = 1$ for $x\in\ran(\alpha_n)$ and $y\ge m$. Let $S$ be the set of all successors $\alpha_n*x$ of $\alpha_n$ with $x\ge m$ and $b_{x} > |q_n|$. The induction breaks into cases according as $\alpha_n$ is or is not a transition node, with diagonalization occurring at transition nodes. 

\subsection*{Case B.1.} If $\alpha_n$ is not a transition node, let $P = \{x:\alpha_n*x\in S\}$. We choose any $x^*\in P$ and any extension $r$ of $q_n$ which presses $b_{x^*}$, and let $q_{n+1} = r$ and $\alpha_{n+1} = \gamma$ for any $\gamma\in S$ having $\gamma(n) = x^*$.

\subsection*{Case B.2.} If $\alpha_n$ is a transition node, the induction breaks into four subcases. Each case is detailed in a separate bullet point below, and the header under each bullet point gives a shorthand for the sort of transition being discussed. The real work of the construction, making full use of the triple labeling of $T^L_1$, takes place in subcases 3 and 4.

%--------------------------------------------------------------------
% subcase 1
%--------------------------------------------------------------------
\subsection*{Case B.2.1.} $\langle \infty,\infty,\infty \rangle \to \langle a,b,\infty \rangle \mbox{ or } \langle a,b,c \rangle$ \\ 

If $\alpha_n$ has label $\langle \infty, \infty, \infty \rangle$ and every successor of $\alpha_n$ has a label in which the symbol $\infty$ appears at most once, then we let 
$$P = \{x:\alpha_n*x\in S\land \forall j<n\,(a,b > b_{\alpha_n(j)})\land a,b > |q_n|)\},$$ 
where $a,b$ here denote the first two entries in the label of $\alpha*x$ and $b_{\alpha_n(j)}$ denotes the button of $\alpha_n(j)$. 
We choose any $x^*\in P$ and any extension $r$ of $q_n$ such that, if the label of $x^*$ is $\langle a^*, b^*, c^* \rangle$ with $a^*,b^*\in\omega$ and $c^*\in\omega\cup\{\infty\}$, then $r$ presses $b_{x^*}$ and $\sigma^r(a^*,b^*)$, $(l^r(a^*))_0$, and $(l^r(b^*))_0$ are not all equal. Recall the definition of $S$, whereby we know that $b_x > |q_n|$ for each $x\in P$. Thus for all $x\in P$, $l^{q_n}$ does not commit us to any particular limiting color for $b_x$. This is why we are free to press the button of $x^*$. Let $q_{n+1} = r$ and $\alpha_{n+1} = \gamma$ for any $\gamma\in S$ having $\gamma(n) = x^*$ with label $\langle a^*,b^*,c^*\rangle$. As $\alpha_{n+1}$ is not a transition node, we now return to Case B.1.
%--------------------------------------------------------------------
% subcase 2
%--------------------------------------------------------------------
\subsection*{Case B.2.2.} $\langle a,\infty,\infty \rangle \to \langle a,b,c \rangle$ \\ 

If $\alpha_n$ has label $\langle a^*, \infty, \infty \rangle$ and every successor of $\alpha_n$ has a label in which only finite numbers appear, then we let 
$$P = \{x:\alpha_n*x\in S\land \forall j<n\,(b,c > b_{\alpha_n(j)})\land b,c > |q_n|)\},$$ 
where $b,c$ here denote the second and third entries in the label of $\alpha*x$ and $b_{\alpha_n(j)}$ denotes the button of $\alpha_n(j)$. We choose any $x^*\in P$ and any extension $r$ of $q_n$ that presses $b_{x^*}$ and is such that, if the label of $x^*$ is $\langle a^*, b^*, c^*\rangle$ with $a^*,b^*,c^*\in\omega$, then $\sigma^r(b^*,c^*)$, $(l^r(b^*))_0$, and $(l^r(c^*))_0$ are not all equal. Let $q_{n+1} = r$ and $\alpha_{n+1} = \gamma$ for any $\gamma\in S$ having $\gamma(n) = x^*$ with label $\langle a^*,b^*,c^*\rangle$. As $\alpha_{n+1}$ is not a transition node, we now return to Case B.1.
%--------------------------------------------------------------------
% subcase 3
%--------------------------------------------------------------------
\subsection*{Case B.2.3.} $\langle a,\infty,\infty \rangle \to \langle a,b,\infty \rangle$ \\ 

%Here the following term is useful. 
%If $\alpha\in T = T(k,\Gamma,H,I)$ is not terminal and if $x,y\in\ran(\alpha)$, we say that $x$ and $y$ \textit{share a column} if there are infinitely many terminal nodes $\beta\succeq\alpha$ in $T$ with a pair of columns $F_L,F_R$ (recall Remark 2.3) such that one of the following is true: 
%\begin{itemize} 
%	\item $x\in F_L$ and $y\in F_L$; 
%	\item $x\in F_R$ and $y\in F_R$; 
%	\item $x\notin F_L\cup F_R$ or $y\notin F_L\cup F_R$. 
%\end{itemize}
%
%Informally, $x$ and $y$ share a column if, in the course of the construction, we can avoid building a $p$-homogeneous set in which $x$ and $y$ appear in different columns. We now resume the construction.

%%%%%%%%%%

For this case we first define by induction a function $\mathsf{Sort}$ on nodes $\alpha\in T_1^L$ which records information about how elements that go into $F_L\oplus F_R$ are sorted between $F_L$ and $F_R$. We define $\mathsf{Sort}$ as follows: 
\begin{itemize} 
\item If $\alpha$ is terminal and we have labeled $\alpha$ with the triple $\langle a,b,c\rangle$ because 
$$\Delta^{H^\Phi_{1,s}\cup (F_L\oplus F_R)}(a)\downarrow = \Delta^{H^\Phi_{1,s}\cup (F_L\oplus F_R)}(b)\downarrow = \Delta^{H^\Phi_{1,s}\cup (F_L\oplus F_R)}(c)\downarrow = 1,$$ 
then we let $\mathsf{Sort}(\alpha) = \{F_L\oplus F_R\}$. 
\item If $\alpha$ is not terminal, then we define 
$$\mathsf{Sort}(\alpha) = \bigcup\left\{S:\,S=\mathsf{Sort}(\beta)\mbox{ for infinitely many 1-extensions $\beta$ of $\alpha$}\right\}.$$
\end{itemize}

\noindent Now we define a convenient term. If $\alpha\in T = T(k,\Gamma,H,I)$ is not terminal and if $x,y\in\ran(\alpha)$, we say that $x$ and $y$ \textit{share a column} if there is $F_L\oplus F_R\in\mathsf{Sort}(\alpha)$ such that one of the following is true: 
\begin{itemize} 
	\item $x\in F_L$ and $y\in F_L$; 
	\item $x\in F_R$ and $y\in F_R$; 
	\item $x\notin F_L\cup F_R$ or $y\notin F_L\cup F_R$. 
\end{itemize}

\noindent Informally, $x$ and $y$ share a column if, in the course of the construction, we can avoid building a $p$-homogeneous set in which $x$ and $y$ appear in different columns. We now resume the construction.

%%%%%%%%%%

If $\alpha_n$ has label $\langle a^*, \infty, \infty \rangle$ and successor of $\alpha_n$ has a label in which the symbol $\infty$ appears exactly once, then we let 
$$P = \{y:\alpha_n*y\in S\land \forall j<n\,(b > b_{\alpha_n(j)})\land b > |q_n|)\},$$ 
where $b$ here denotes the second entry in the label of $\alpha_n*y$ and $b_{\alpha_n(j)}$ denotes the button of $\alpha_n(j)$. Whether there is much work to be done in this case depends on whether the first node to have label $\langle a^*,\infty,\infty \rangle$ was or was not the root node of $T^L_1$. Formally, suppose $k$ is the least index such that $\alpha_k$ has a label in which the symbol $\infty$ appears exactly twice. 
\begin{itemize}
\item If, on the one hand, $\alpha_k = \alpha_0 = \emptyset$, then we choose any $y^*\in P$ and any extension $r$ of $q_n$ which presses $b_{y^*}$ and is such that, if the label of $y^*$ is $\langle a^*,b^*,\infty \rangle$, then $\sigma^r(a^*,b^*)$, $(l^r(a^*))_0$, and $(l^r(b^*))_0$ are not all equal. In this case let $q_{n+1} = r$ and $\alpha_{n+1} = \gamma$ for any $\gamma\in S$ having $\gamma(n) = x^*$ with label $\langle a^*,b^*,\infty\rangle$. 

\item If, on the other hand, $k > 0$ and $\alpha_k\ne\emptyset$, then there is more work to do. Suppose that $\alpha_k(k-1) = x^*$. Let $P'\subseteq P$ contain precisely the elements $y$ of $P$ which share a column with $x^*$. This is where we begin to use the triple labels of $T^L_1$. Either $P'\ne\emptyset$ and we complete the diagonalization in this case, or else $P'=\emptyset$ and we wait until the next case to diagonalize, but we are guaranteed to succeed when we attempt in the next case to find pairs of elements which share a column.
\end{itemize}
The important idea here is intuitively as follows. Either we may choose $y^*$ from $P$ to share with $x^*$ a column of the $p$-homogeneous set, or else $x^*$ and $y^*$ do not share a column and then $z^*$ may be chosen at the next transition node to share a column with one of $x^*$ or $y^*$. In either case, the color that $\Phi^f$ assigns to the pair of numbers which share a column can safely be changed without disturbing the construction. 

%----------------------------------------------------COLUMN PIGEONHOLE ILLUSTRATION-------------------------------------------------%

\begin{center}\begin{tikzpicture}[scale=0.9]
% first subcase
\node at (1.125,3.25) {$P'\ne\emptyset$};
\draw (0,0) rectangle (1,3); \node at (0.5,0.25) {$x^*$};
\draw (1.25,0) rectangle (2.25,3); \node at (0.5,0.75) {$y^*$};
% second subcase
\node at (5.125,3.25) {$P'=\emptyset$};
\draw (4,0) rectangle (5,3); \node at (5.75,0.75) {$y^*$};
\draw (5.25,0) rectangle (6.25,3); \node at (4.5,0.25) {$x^*$};
\draw[-latex] (6.4,1) -- (8,0.5);
\draw[-latex] (6.4,2) -- (8,2.5);
	\begin{scope}[xshift=6.25cm, yshift=2cm, scale=0.5]
	\draw (4,0) rectangle (5,3); \node at (5.75,0.85) {$y^*$};
	\draw (5.25,0) rectangle (6.25,3); \node at (4.5,0.35) {$x^*$};
	\node at (4.5,1.35) {$z^*$};
	\end{scope}
\begin{scope}[xshift=6.25cm, yshift=-0.5cm, scale=0.5]
\draw (4,0) rectangle (5,3); \node at (5.75,0.85) {$y^*$};
\draw (5.25,0) rectangle (6.25,3); \node at (4.5,0.35) {$x^*$};
\node at (5.75,1.55) {$z^*$};
\end{scope}
\node at (1.125,-0.35) {$\Phi^f(x^*,y^*)$ free};
\node at (5.125,-0.35) {$\Phi^f(x^*,y^*)$ not free};
\node at (10.7,0.25) {$\Phi^f(y^*,z^*)$ free};
\node at (10.7,2.75) {$\Phi^f(x^*,z^*)$ free};
\end{tikzpicture}\end{center}

%------------------------------------------------------------------------------------------------------------------------------

If $P'$ is nonempty, then we choose any $y^*\in P'$ with label $\langle a^*,b^*,\infty\rangle$ and a condition $r$ which extends $q_n$ except possibly having $(l^r(a^*))_1\ne (l^{q_n}(a^*))_1$ if the latter is defined; and we choose $y^*$ and $r$ such that $r$ presses $b_{y^*}$ and $\sigma^r(a^*,b^*)$, $(l^r(a^*))_0$, and $(l^r(b^*))_0$ are not all equal; and we let $q_{n+1} = r$ and $\alpha_{n+1} = \gamma$ for any $\gamma\in S$ having $\gamma(n) = y^*$ with label $\langle a^*,b^*,\infty\rangle$. Finally, if $\sigma$ is any non-terminal extension of $\alpha_{n+1}$ in $T^L_1$ such that $x^*$ and $y^*$ do not share a column, then we delete from $T^L_1$ $\sigma$ and all of its extensions. As $\alpha_{n+1}$ is not a transition node, we now return to Case B.1.

Such $y^*$ and $r$ exist in this case for the following reason. Observe that for any $y^*\in P'$ with label $\langle a^*,b^*,\infty\rangle$, $a^*\ne b_{\alpha_n(j)}$ for $j\ne k-1$. Since $b^* > |q_n|$, $\sigma^{q_n}(a^*,b^*)$ is not yet defined, so we may choose $r$ extending $q_n$ with $\sigma^{r}(a^*,b^*)\ne (l^r(a^*))_0$---unless $l^{q_n}(a^*)$ is defined and $(l^{q_n}(a^*))_1\le b^*$. In this latter case we let $(l^r(a^*))_0 = (l^{q_n}(a^*))_0$ but choose $(l^r(a^*))_1 > b^*$. Note that in this case $r$ is not an extension of $q_n$ but that $r$ does extend $p_s$. From here we extend $r$ rather than $q_n$. 

To conclude the argument in this case, it remains to observe that changing the stabilization point of $a^*$ as above does not injure our construction in any way. For the colors of pairs of elements of $\alpha_n$ are forced by facts about $\sigma^{q_n}$ alone. Furthermore, the fact that $\Phi^f(u,y^*) = 1$ for all $u\in\ran(\alpha_n)$, $u\ne x^*$ with parity unequal to that of $y^*$ is settled by facts about the stabilization points of such numbers $u$, and these are unaffected by altering $(l^{q_n}(a^*))_1$ since $a^*$ is not the button of any such $u$. This means that choosing $(l^r(a^*))_1\ne(l^{q_n}(a^*))_1$ can at most interfere with the stabilization point of $x^*$ and keep us from forcing $\Phi^f(x^*,y^*)=1$. But note that this does not matter for purposes of the $p$-homogeneous set we are building, 
since $x^*$ and $y^*$ will not appear in different columns of the $p$-homogeneous set we are building and so their mutual color is irrelevant. 

\setlength{\leftskip}{0pt}
\setlength{\rightskip}{0pt}

If on the other hand $P'$ is empty, then we proceed as in the non-transition case and will diagonalize in Case B.2.4 instead. 
%--------------------------------------------------------------------
% subcase 4
%--------------------------------------------------------------------
\newpage
\subsection*{Case B.2.4.} $\langle a,b,\infty \rangle \to \langle a,b,c \rangle$ \\ 

If $\alpha_n$ has label $\langle a^*,b^*,\infty\rangle$ and every successor of $\alpha_n$ has a label in which only finite numbers occur, and if we failed to diagonalize at an earlier node in the previous case, then we proceed as follows; otherwise we proceed as in the non-transition case. Let 
$$P = \{z:\alpha_n*z\in S\land \forall j<n\,(c > b_{\alpha_n(j)})\land c > |q_n|)\},$$ 
where $c$ here denotes the third entry in the label of $\alpha_n*z$. Suppose $k$ is the least index such that $\alpha_k$ has a label in which the symbol $\infty$ appears exactly twice and that $l$ is the least index such that $\alpha_l$ has a label in which the symbol $\infty$ appears exactly once. Suppose that $\alpha_k(k-1) = x^*$ and that $\alpha_l(l-1) = y^*$. Let $P'\subseteq P$ contain precisely those elements of $P$ which share a column with $x^*$ and $P''\subseteq P$ contain precisely those elements of $P$ which share a column with $y^*$. At least one of $P',P''$ must be nonempty; without loss of generality we assume that $P''$ is nonempty. Then we choose $z^*\in P''$ with label $\langle a^*,b^*,c^*\rangle$ and a condition $r$ which extends $q_n$ except possibly having $(l^r(b^*))_1\ne (l^{q_n}(b^*))_1$ if the latter is defined; and we choose $z^*$ and $r$ such that $\sigma^r(b^*,c^*)$, $(l^r(b^*))_0$, and $(l^r(c^*))_0$ are not all equal and $r$ presses $b_{z^*}$. We let $q_{n+1} = r$ and $\alpha_{n+1} = \gamma$ for any $\gamma\in S$ having $\gamma(n) = z^*$ with label $\langle a^*,b^*,c^*\rangle$. Such $z^*$ and $r$ exist by the same reasoning given in the previous case. Finally, if $\sigma$ is any non-terminal extension of $\alpha_{n+1}$ in $T^L_1$ such that $y^*$ and $z^*$ do not share a column, then we delete from $T^L_1$ $\sigma$ and all of its extensions. As $\alpha_{n+1}$ is not a transition node, we now return to Case B.1.
%As $\alpha_{n+1}$ is not a transition node, we now return to Case B.1.

We complete stage $s$ as follows. If added some set $P$ to $Y$, or if we defined $I_{s+1}$ to be the range of an infinite path through $T_0$ or $T_1$, we are done. For if we added some set $P$ to $Y$, then there will be an $(f\oplus P)$-computable set $p$-homogeneous for $\Phi^f$, but by Lemma 3.3 there will be no $(f\oplus P)$-computable set homogeneous for $f$. And if we defined $I_{s+1}$ to be the range of an infinite path through $T_0$ or $T_1$---say through $T_0$---then from the definition of that tree $\Gamma^{H^\Phi_0}$ does does not define an infinite set, let alone one homogeneous for $f$. 

Otherwise, we succeeded either in defining $\alpha_{n+1}$ for each non-terminal $\alpha_n$ in the sequence of nodes through $T^L_0$ or else in defining $\alpha_{n+1}$ for each non-terminal $\alpha_n$ in the sequence of nodes through $T^L_1$; say we succeeded in defining the sequence of nodes in $T^L_0$. That tree was in this case well-founded, so for some $i$, $\alpha_n$ was terminal. Then from the definition of the tree, there are some $F_L,F_R\subseteq\ran(\alpha_n)$ such that 
$\Gamma^{H^\Phi_{0,s}\cup F_L\oplus F_R}(a)\downarrow = \Gamma^{H^\Phi_{0,s}\cup F_L\oplus F_R}(b)\downarrow = 1$ 
for some unequal $a,b\ge |p_s|$, say with use $u$. Let $p_{s+1} = q_n$, $H^\Phi_{0,s+1} = H^\Phi_{0,s}\cup F_L\oplus F_R$, and $I_{s+1} = \{x\in I_s:x > u\}$.
\end{proof}

% --------------------------------------------------
% ------------- MAIN THEOREM 2 (SPT vs SIPT) -------
% --------------------------------------------------

\section{Second Main Theorem}

\begin{theorem}
{\it There exists a stable 2-coloring of pairs $f:[\omega]^2\to 2$ and a family $Y$ of infinite sets such that no $(f\oplus P)$-computable set is $p$-homogeneous for $f$ for any $P\in Y$, and every stable 2-coloring of pairs $f':[\omega]^2\to 2$ computable from $f$ has either an $(f\oplus P)$-computable increasing $p$-homogeneous set for some $P\in Y$, or if not then some increasing $p$-homogeneous set which does not compute a set $p$-homogeneous for $f$. }
\end{theorem}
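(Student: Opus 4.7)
The plan is to adapt the proof of Theorem 3.1 by shifting both sides of the diagonalization down by one notch: we diagonalize against a Turing functional $\Gamma$ computing a $p$-homogeneous set for $f$ (rather than a homogeneous set), while building increasing $p$-homogeneous sets for the potential $\SIPT$ instances $\Phi^f$ (rather than $p$-homogeneous sets for $\SPT$ instances). The forcing notion $\mathbb{C}$ from Section 3 is reused unchanged, since it already produces stable colorings together with explicit limit data.

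I would begin by proving an analog of Lemma 3.3: if $p_0 \ge p_1 \ge \cdots$ is $3$-generic relative to $P$ and $f = \bigcup_s \sigma^{p_s}$, then $f \oplus P$ does not compute a $p$-homogeneous set for $f$. The argument parallels Lemma 3.3 closely; one shows that the $\Sigma^0_3(P)$ set of conditions forcing either that $\Gamma^{f \oplus P}$ fails to decode to $G = G_L \oplus G_R$ with both columns infinite, or that there are $a \in G_L$ and $b \in G_R$ with $\lim_u f(a,u) \ne \lim_u f(b,u)$, is dense in $\mathbb{C}$. The key observation is that any infinite $p$-homogeneous set $G$ with both columns infinite forces every element's limit to equal the common color, while the limit data recorded in $l^p$ remains independent enough from the combinatorial data in $\sigma^p$ that the density argument from Lemma 3.3 carries over.

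With this lemma in hand, I would set up parallel requirements $\mathcal{P}_i$, $\mathcal{Q}_{\Phi,i}$, and $\mathcal{R}^{\Phi}_{\Gamma,\Delta}$ in which $H^\Phi_j$ encodes an increasing $p$-homogeneous set of color $j$ for $\Phi^f$, and $\mathcal{R}^{\Phi}_{\Gamma,\Delta}$ ensures that neither $\Gamma^{H^\Phi_0}$ nor $\Delta^{H^\Phi_1}$ computes a $p$-homogeneous set for $f$. The tree labeling machinery of Section 2 then applies with one adjustment: a node in $T(k,\Gamma,H,I)$ is terminal when $F_L, F_R \subseteq \ran(\alpha^{\#})$ and $a, b \ge k$ can be found such that $\Gamma^{H \cup (F_L \oplus F_R)}$ places $a$ into its left column and $b$ into its right column. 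The diagonalization at a terminal node then forces $\sigma^q(a,b)$, $(l^q(a))_0$, and $(l^q(b))_0$ not all equal, exactly as in the proof of Theorem 3.1.

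The $\mathcal{Q}$ and non-transition cases of $\mathcal{R}$ are in fact slightly easier than their counterparts in Theorem 3.1, because placing an element into the right column of an increasing $p$-homogeneous set imposes no constraint on its eventual limit under $\Phi^f$ and requires only that the cross-color be correct for strictly smaller left-column elements already enumerated. The main obstacle remains Case B of the $\mathcal{R}$ requirement, where I would rerun the three-label tree labeling together with the $\mathsf{Sort}$ function and the column-sharing analysis of Cases B.2.3 and B.2.4. The essential bookkeeping is unchanged, but the hard part will be to verify that the ``pressing the button'' mechanism continues to produce extensions compatible with the now weaker right-column demands, so that the interaction between diagonalization (changing a pair color for $f$) and coloring control (fixing limits of $\Phi^f$) still goes through.
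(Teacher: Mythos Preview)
Your proposal is correct and tracks the paper's proof closely. Two points deserve comment.

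For the analog of Lemma 3.3, the paper (Lemma 4.2) takes a shortcut you may have missed: since any $p$-homogeneous set for $f$, together with $f$, computes a homogeneous set for $f$ (via the argument of Lemma 1.6), Lemma 3.3 already rules out an $(f\oplus P)$-computable $p$-homogeneous set. Your direct density argument would also work but is unnecessary.

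More substantively, your claim that the bookkeeping in Cases B.2.3 and B.2.4 is ``essentially unchanged'' is precisely where the paper's argument diverges from Theorem 3.1. The symmetric ``share a column'' relation is replaced by an asymmetric trichotomy on pairs $x^* < y^*$: Configuration I ($x^*\in F_R$, $y^*\in F_L$), Configuration II (same column or one absent), and Configuration III ($x^*\in F_L$, $y^*\in F_R$). For an \emph{increasing} $p$-homogeneous set only Configuration III constrains $\Phi^f(x^*,y^*)$, so both I and II are safe for diagonalization in B.2.3; the pigeonhole in B.2.4 then exploits the fact that failure in B.2.3 pins $x^*$ to $F_L$ and $y^*$ to $F_R$ in every sorting. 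Correspondingly, the three-label tree is built with alternating column targets ($2a{+}1$, $2b$, $2c{+}1$) so that both $(a,b)$ and $(b,c)$ are cross-column pairs available for spoiling $p$-homogeneity of the computed set. This is exactly the ``interaction between diagonalization and coloring control'' you anticipate having to check, but it is a genuine modification of the combinatorics, not a rerun of Theorem 3.1.
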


Before proving Theorem 4.1, we observe that the Second Main Theorem is a direct consequence. The explanation is the same as that given in section 3 for the deduction of the First Main Theorem from Theorem 3.1. 

%\begin{corollary}
%$\mathsf{SPT}^2_2 \not\le_{\rm sc} \mathsf{SIPT}^2_2$.
%\end{corollary}

We recall that $\mathbb{C}$ is the following notion of forcing: a condition is an ordered triple $p=\langle \sigma^p,l^p,|p| \rangle$ where $|p|\in\omega$, $\sigma^p:\left[|p|\right]^2\to 2$, $l^p:|p|\to 2\times\omega$, and $l^p(x) = \langle i,z \rangle$ implies that if $\sigma^p(x,y)$ is defined and $y\ge z$ then $\sigma^p(x,y) = i$. 

\begin{lemma}
If $p_0\ge p_1\ge p_2\ge \cdots$ is a sequence of $\mathbb{C}$-conditions which is 3-generic relative to some set $P\subseteq\omega$, and if $f = \bigcup_s \sigma^{p_s}:[\omega]^2\to 2$, then $f\oplus P$ does not compute a $p$-homogeneous set for $f$. 
\end{lemma}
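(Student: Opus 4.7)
My plan is to follow the pattern of the proof of Lemma 3.3, adapting it so that the diagonalization defeats $p$-homogeneity rather than plain homogeneity. The key preliminary observation is this: if $H = H_L \oplus H_R$ is $p$-homogeneous for $f$ with $f \upharpoonright H_L \times H_R$ constantly equal to $c$, and $H_R$ is infinite, then for each $x \in H_L$ we have $f(x,y) = c$ for infinitely many $y$, so $\lim_u f(x,u) = c$. Thus $H_L$ is automatically limit homogeneous with color $c$, and to defeat $p$-homogeneity it suffices to ensure either that one of $H_L, H_R$ is finite or that $H_L$ contains two elements whose limit colors differ. This reduces the diagonalization to a problem of the same flavor as the one solved in Lemma 3.3.

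Concretely, for each Turing functional $\Gamma$ I would let $W_{P,\Gamma}$ be the set of conditions $p$ which force one of: (a) $\Gamma^{f\oplus P}$ fails to define a set $H$ both of whose columns $H_L := \{x : 2x \in H\}$ and $H_R := \{x : 2x+1 \in H\}$ are infinite; (b) there exist $x \ne y$ with $\Gamma^{f\oplus P}(2x) \downarrow = \Gamma^{f\oplus P}(2y) \downarrow = 1$ and $\lim_u f(x,u) \ne \lim_u f(y,u)$. As in Lemma 3.3, $W_{P,\Gamma}$ is $\Sigma^0_3$-definable in $P$, so it suffices to establish density. Given $p \in \mathbb{C}$ no extension of which forces (a), I would suppose toward contradiction that no extension forces (b); then no $x,y \ge |p|$, $\tau$ extending $\sigma^p$, and $L$ extending $l^p$ simultaneously satisfy (1) $\tau$ respects $L$, (2) $\Gamma^{\tau \oplus P\upharpoonright|p|}(2x) \downarrow = \Gamma^{\tau \oplus P\upharpoonright|p|}(2y) \downarrow = 1$, and (3) $(L(x))_0 \ne (L(y))_0$. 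However, if no $x,y,\tau,L$ even satisfied (1)--(2), then $p$ would force $H_L$ to be finite, contradicting our hypothesis. Hence some $x,y,\tau,L$ witness (1)--(2), and --- exactly as in Lemma 3.3 --- condition (3) is independent of the others: we may flip $(L(x))_0$ while pushing $(L(x))_1$ past the use of the $\Gamma$-computation at $2x$, producing a variant $L'$ that still gives (1)--(2) and now also (3). This yields the desired extension of $p$ forcing (b), contradicting our supposition and proving density.

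Finally, by 3-genericity of the sequence $(p_s)$ relative to $P$, some $p_s$ lies in $W_{P,\Gamma}$. If $\Gamma^{f\oplus P}$ were $p$-homogeneous for $f$, then both $H_L$ and $H_R$ would be infinite (ruling out (a)) and, by the opening observation, $H_L$ would have a single limit color (ruling out (b)); contradiction. Since $\Gamma$ is arbitrary, $f \oplus P$ computes no $p$-homogeneous set for $f$. The main obstacle I anticipate is precisely that opening observation --- identifying exactly which weaker combinatorial property of $H_L$ is already forced by $p$-homogeneity so that one can diagonalize against it using only limit colors; once that reduction is in hand, the combinatorial and forcing-theoretic scaffolding of Lemma 3.3 transfers essentially verbatim, with $2x$ and $2y$ replacing $x$ and $y$ in the witnessing computations.
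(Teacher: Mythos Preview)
Your proof is correct, but it takes a different route from the paper's. The paper's proof of this lemma is a one-line reduction to Lemma~3.3: if $f\oplus P$ computed a $p$-homogeneous set $H$ for $f$, then $f\oplus H$ (and hence $f\oplus P$) would compute a homogeneous set for $f$ by Lemma~1.6/Proposition~1.5, contradicting Lemma~3.3. Your approach instead reproves Lemma~3.3 from scratch with the indices $2x,2y$ in place of $x,y$, using your opening observation that the left column of a $p$-homogeneous set is limit homogeneous. Both arguments rest on the same combinatorial fact, but the paper invokes it externally (to pass from $p$-homogeneous to homogeneous and then cite Lemma~3.3), whereas you fold it into the density argument itself. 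The paper's route is shorter and avoids re-checking the $\Sigma^0_3$ complexity and density claims; your route is more self-contained and makes explicit exactly which feature of $p$-homogeneity is being diagonalized against.
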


\begin{proof}[Proof of Lemma 4.2] This follows from Lemma 3.3, since if $f\oplus P$ were to compute a $p$-homogeneous set for $f$, then from that set together with $f$ one could compute a homogeneous set for $f$.
\end{proof}

\begin{proof}[Proof of Theorem 4.1]
We build
\begin{itemize} 
\item a sequence of $\mathbb{C}$-conditions $p_0\ge p_1\ge p_2\ge\cdots$ with $\lim_s |p_s| = \infty$; 
\item sequences of finite sets (initial segments of increasing $p$-homogeneous sets) $H^\Phi_{j,0}\subseteq H^\Phi_{j,1}\subseteq H^\Phi_{j,2}\subseteq\cdots$ for each Turing functional $\Phi$ and each $j<2$; 
\item a sequence of infinite sets (reservoirs) $I_0\supseteq I_1\supseteq I_2\supseteq\cdots$ with $H^\Phi_{j,s} < I_s$ for each $\Phi,j,s$; 
\item a sequence of finite families $Y_0\subseteq Y_1\subseteq Y_2\subseteq\cdots$ of infinite subsets of $\omega$; 
\end{itemize} 
and we define $f = \bigcup_s \sigma^{p_s}$, $H^\Phi_j = \bigcup_s H^\Phi_{j,s}$ for each $j<2$, and $Y = \bigcup_s Y_s$. The construction will ensure the following requirements, for all $i\in\omega$ and Turing functionals $\Phi,\Gamma,\Delta$: 

\hspace*{9mm}\begin{center}\begin{tabular}{lcl} 
$\mathcal{P}_i$ & : & the sequence $p_0\ge p_1\ge p_2\ge \cdots$ is 3-generic relative to each $P\in Y_i$; \\ 
& & \\
$\mathcal{Q}_{\Phi,i}$ & : & if $\Phi^f$ is a stable 2-coloring of pairs, it either has an $(f\oplus P)$-computable \\ 
& &  increasing $p$-homogeneous set for some $P\in Y$ or else both $H^\Phi_0$ and $H^\Phi_1$ \\ & & are infinite in both columns; \\ 
& & \\
$\mathcal{R}^\Phi_{\Gamma,\Delta}$ & : & if $\Phi^f$ is a stable 2-coloring of pairs, it either has an $(f\oplus P)$-computable \\ 
& & increasing $p$-homogeneous set for some $P\in Y$; or else if $\Gamma^{H^\Phi_0}$ defines an \\ 
& & infinite set then this set is not $p$-homogeneous for $f$; or else if $\Delta^{H^\Phi_1}$ \\ 
& & defines an infinite set then this set is not $p$-homogeneous for $f$. 
%$\mathcal{S}^\Phi_j$ & : & 
\end{tabular}\end{center}

\underline{\it Construction.} 

Devote infinitely many stages $s\in\omega$ to each requirement. Let $p_0$ be any condition with $|p_s| = 0$. For each $\Phi$ let $H^\Phi_{0,0} = H^\Phi_{1,0} = \emptyset$, and let $I_0 = \omega$ and $Y_0 = \emptyset$. At stage $s+1$ assume by way of induction that we have $p_s$, $H^\Phi_{j,s}$ for $j<2$ and all $\Phi$, $I_s$, and $Y_s$ and assume that if $H^\Phi_{j,s}$ is nonempty for some $j$ and $\Phi$, then $p_s$ forces that $\Phi^f$ is a stable coloring of pairs and that for $x<y$, $\Phi^f(x,y) = j$ whenever $2x,2y+1\in H^\Phi_{j,s}$ or when $y\in I_s$ and $2x\in H^\Phi_{j,s}$. At the end of a stage any of $p_{s+1}$, $H^\Phi_{j,s+1}$, $I_{s+1}$, or $Y_{s+1}$ not yet defined should be taken to be identical to $p_s$, $H^\Phi_{j,s}$, $I_{s}$, or $Y_s$, respectively. 

%----------------------
%----P requirements----
%----------------------

\subsection*{$\mathcal{P}$ requirements.} Suppose $s$ is dedicated to requirement $\mathcal{P}_i$ for some $i < s$ and that it is the $\langle n,m \rangle^{\rm th}$ such stage. If $n > |Y_i|$ do nothing. Otherwise, let $P$ be the $n^{\rm th}$ member of the family $Y_i$ in some fixed enumeration and let $W$ be the $m^{\rm th}$ $\Sigma^0_3(P)$ set in some fixed enumeration. If $p_s$ has an extension $q$ in $W$, fix $q$ and let $p_{s+1} = q$, so that $p_0\ge p_1\ge \cdots \ge p_s\ge p_{s+1}\ge \cdots$ meets $W$. Otherwise, do nothing, and $p_0\ge p_1\ge \cdots \ge p_s\ge p_{s+1}\ge \cdots$ avoids $W$. 

%----------------------
%----Q requirements----
%----------------------

\subsection*{$\mathcal{Q}$ requirements.} Suppose $s$ is dedicated to $\mathcal{Q}_{\Phi,i}$. Without loss of generality we assume that $p_s$ decides whether or not $\Phi^f$ is a stable 2-coloring of pairs. If $p_s$ forces that $\Phi^f$ is not such a coloring, do nothing. Otherwise, we consider two cases. 
\begin{itemize} 
\item If for some $j < 2$ and $k\in\omega$ there is no extension of $p_s$ which forces that $\lim_u\Phi^f(x,u) = j$ for some $x\ge k$ in $I_s$, then $P = \{x\in I_s:x\ge k\}$ is limit homogeneous for $\Phi^f$ with color $1-j$, and so $(f\oplus P)$ computes an increasing $p$-homogeneous set for $\Phi^f$. We set $Y_{s+1} = Y_s\cup\{P\}$. This satisfies the requirement. 
\item If no such $j,k$ exist, then there are numbers $x_{00},x_{01},x_{10},x_{11}\in I_s$ with $x_{00}<x_{01}$ and $x_{10} < x_{11}$ and an extension of $p_s$ forcing that $H^\Phi_{j,s}\cup\{2x_{j0},2x_{j1}+1\}$ is finite increasing $p$-homogeneous for $\Phi^f$ and $\lim_u\Phi^f(x_{ji},u) = j$ for each $i,j<2$. In this case let $p_{s+1}$ be such an extension of $p_s$, let $H^\Phi_{j,s+1} = H^\Phi_{j,s}\cup\{2x_{j0},2x_{j1}+1\}$, and let $I_{s+1} = \{x:\,m<x\in I_s\}$ where $m$ is greater than the stabilization points under $\Phi^f$ of every element of $H^\Phi_{0,s+1}\cup H^\Phi_{1,s+1}$. Observe that both columns have been extended by one element. 
\end{itemize}

%----------------------
%----R requirements----
%----------------------

\subsection*{$\mathcal{R}$ requirements.} Suppose $s$ is dedicated to $\mathcal{R}^{\Phi}_{\Gamma,\Delta}$ and assume that $p_s$ forces that $\Phi^f$ is a stable coloring of pairs. As before, the goal of this requirement is to extend by some finite set either the initial segment of $H^\Phi_0$ or the initial segment of $H^\Phi_1$ that we have constructed so far, subject to the following condition: if we extend $H^\Phi_0$, then $\Gamma$ does not compute a $p$-homogeneous set for $f$ from any further extension of $H^\Phi_0$; and if on the other hand we extend $H^\Phi_1$, then $\Delta$ does not compute a $p$-homogeneous set for $f$ from any further extension of $H^\Phi_1$. Just as in the proof of Theorem 3.1, the finite set by which we extend either $H^\Phi_0$ or $H^\Phi_1$ will be obtained as a subset of the range of a path through an infinitely branching tree. When we have so extended one of $H^\Phi_0,H^\Phi_1$, we will say, as before, that we have successfully diagonalized against such computations. 

The means by which we accomplish the diagonalization here will be slightly more involved. There will again be two numbers, say $a$ and $b$, and two finite sets $F_L$ and $F_R$, all arising from a tree labeling construction, about which we know either that 
$$\Gamma^{H^\Phi_0\cup (F_L\oplus F_R)}(2a+1)\downarrow = \Gamma^{H^\Phi_0\cup (F_L\oplus F_R)}(2b)\downarrow = 1$$ 
or else that 
$$\Delta^{H^\Phi_1\cup (F_L\oplus F_R)}(2a+1)\downarrow = \Delta^{H^\Phi_1\cup (F_L\oplus F_R)}(2b)\downarrow = 1.$$ 
Let us suppose we know the first. In other words, we know that if we extend $H^\Phi_0$ by $F_L\oplus F_R$, then the set computed from $H^\Phi_0\cup(F_L\oplus F_R)$ by $\Gamma$ will contain $2a+1$ and $2b$. When we view the set so computed as having two columns (i.e. as being the join of two sets), this means that $a$ appears in the right-hand column and $b$ appears in the left-hand column. Thus to ensure that the set so computed is not $p$-homogeneous for $f$, we will choose as before an extension $q$ of the condition $p_s$ such that $\sigma^q(a,b)$, $(l^q(a))_0$, and $(l^q(b))_0$ are not all equal. This will guarantee that no set containing both $a$ and $b$ is $p$-homogeneous for the coloring eventually obtained by extending $\sigma^q$.

As in the proof of Theorem 3.1, the construction divides into Case B and Case A, whose description is unchanged. That is, in Case B we suppose there is an $i<2$, a condition $q\le p_s$, an infinite set $I\subseteq I_s$, and a set $Q = \{\langle x,a_x,b_x \rangle: x\in I,\,a_x,b_x\in\omega\}$ such that there is exactly one triple in $Q$ with first element $x$ for each $x\in I$, $a_x < b_x$, and such that infinitely many numbers $b_x\in\omega$ appear as third elements of triples in $Q$; and we suppose these have the following property: for any $x\in I$ and any extension $r$ of $q$ such that $\sigma^r(a_x,b_x)$, $(l^r(a_x))_0$, and $(l^r(b_x))_0$ are not all equal will force that $\lim_u \Phi^f(x,u) = i$. In Case A we suppose this is not so.

\subsection*{Case A}

For this proof we need again to modify slightly the definition of tree labeling with two labels. We now say that a nonempty string $\alpha\in T(k,\Gamma,H,I)$ if $\alpha\in I^{<\omega}$ is increasing and there are no finite $F_L,F_R\subseteq\ran(\alpha^{\#})$ and no $b > a \ge k$ such that 
$$\Gamma^{H\cup (F_L\oplus F_R)}(2a+1)\downarrow = \Gamma^{H\cup (F_L\oplus F_R)}(2b)\downarrow = 1.$$
With this modification of the tree labeling definition in hand, we now as before let $T_0 = T(|p_s|,\Gamma,H^\Phi_{0,s},I_s)$. If $T_0$ is not well-founded then let $I_{s+1}$ be the range of an infinite path through $T_0$. Observe that in this case the requirement is satisfied. If $T_0$ is well-founded, then let $T^L_0$ be the labeled subtree of $T_0$. 

We now try to define two sequences, conditions 
$$p_s\ge q_0 \ge q_1 \ge q_2 \ge \cdots$$ 
and nodes of $T^L_0$ 
$$\emptyset = \alpha_0 \preceq \alpha_1 \preceq \alpha_2 \preceq \cdots$$ 
where for all $j\ge 0$, $\alpha_{j+1}$ is a successor of $\alpha_j$ and for all $j\ge 0$ the condition $q_j$ forces that 
$$\lim_u \Phi^f(x,u) = 0$$ 
for all $x\in\ran(\alpha_j)$. From here the proof is exactly the same as that of Case A in Theorem 3.1.

\subsection*{Case B}

Fix an $i<2$, a condition $q\le p_s$, an infinite set $I\subseteq I_s$, and a set $Q = \{\langle x,a_x,b_x \rangle: x\in I,\,a_x,b_x\in\omega\}$ such that there is exactly one triple in $Q$ with first element $x$ for each $x\in I$; such that infinitely many numbers $b_x\in\omega$ appear as third elements of triples in $Q$; and such that for any $x\in I$ and any extension $r$ of $q$ such that $\sigma^r(a_x,b_x)$, $(l^r(a_x))_0$, and $(l^r(b_x))_0$ are not all equal will force that $\lim_u \Phi^f(x,u) = i$. Suppose without loss of generality that $i=1$. Thus for any $x\in I$, any forcing extension which presses the button of $x$ forces that $\lim_u \Phi^f(x,u)=1$. 

Here again we must modify the tree labeling method, varying tree labeling with three labels. The definition of $T(k,\Gamma,H,I)$ is for this case changed to the following. $\emptyset \in T(k,\Gamma,H,I)$ and for a nonempty string $\alpha$, $\alpha\in T(k,\Gamma,H,I)$ if $\alpha\in I^{<\omega}$ is increasing and there are no finite $F_L,F_R\subseteq\ran(\alpha^{\#})$ and no $c > b > a \ge k$ such that 
$$\Gamma^{H\cup(F_L\oplus F_R)}(2a+1)\downarrow = \Gamma^{H\cup(F_L\oplus F_R)}(2b)\downarrow = \Gamma^{H\cup(F_L\oplus F_R)}(2c+1)\downarrow = 1.$$
The method for labeling the nodes of $T(k,\Gamma,H,I)$ extends the method from tree labeling with two labels in the natural way, as does the method for selecting the nodes of the labeled subtree $T^L(k,\Gamma,H,I)$. Remark 2.3 applies \textit{mutatis mutandis}. Finally, we use the revised the definition of a transition node from Case B in the proof of the previous theorem: a node $\alpha\in T^L$ is called a transition node if the symbol $\infty$ appears in the label of $\alpha$, and appears strictly fewer times \textit{and no more than once} in the label of each successor of $\alpha$. 

Now let $T_1 = T(|q|,\Delta,H^{\Phi}_{1,s},I_s)$. If $T_1$ is not well founded then let $I_{s+1}$ be the range of an infinite path through $T_1$. Observe that in this case the requirement is satisfied. If $T_1$ is well founded, let $T^L_1$ be the labeled subtree of $T_1$.

We now try as in the proof of Theorem 3.1 to define two sequences, conditions 
$$q\ge q_0\ge q_1\ge q_2\ge \cdots$$ 
and nodes of $T^L_1$ 
$$\emptyset=\alpha_0\preceq \alpha_1\preceq \alpha_2\preceq\cdots$$
where for all $j\ge 0$, $\alpha_{j+1}$ is successor of $\alpha_j$ and for all $j\ge 0$ the condition $q_j$ forces that 
$$\lim_u\Phi^f(x,u) = 0$$ 
for all $x\in\ran(\alpha_j)$. From here the proof is exactly the same as that of Theorem 3.1 up until Case B.2.3, where the diagonalization strategy changes to reflect the different combinatorics of the present theorem. We therefore refer the reader to the previous proof, and restart this proof at Case B.2.3 below. 

Recalling the function $\mathsf{Sort}$ defined in Case B.2.3 of the previous proof, we make a new definition. 
If $\alpha\in T^L_1$ is not terminal and $x,y\in\ran(\alpha)$, we define the following properties: 
\begin{itemize} 
	%\item \textit{Configuration I} holds if there are infinitely many terminal nodes $\beta\succeq\alpha$ with a pair of columns $F_L,F_R$ (recall Remark 2.3) such that $x\in F_R$ and $y\in F_L$. 
	%\item \textit{Configuration II} holds if there are infinitely many terminal nodes $\beta\succeq\alpha$ with a pair of columns $F_L,F_R$ such that $x,y\in F_L$ or $x,y\in F_R$ or one of $x,y$ appears in neither of $F_L,F_R$. 
	%\item \textit{Configuration III} holds if there are infinitely many terminal nodes $\beta\succeq\alpha$ with a pair of columns $F_L,F_R$ such that $x\in F_L$ and $y\in F_R$. 
	\item \textit{Configuration I} holds if there is $F_L\oplus F_R\in\mathsf{Sort}(\alpha)$ such that $x\in F_R$ and $y\in F_L$. 
	\item \textit{Configuration II} holds if there is $F_L\oplus F_R\in\mathsf{Sort}(\alpha)$ such that $x,y\in F_L$ or $x,y\in F_R$ or one of $x,y$ appears in neither of $F_L,F_R$. 
	\item \textit{Configuration III} holds if there is $F_L\oplus F_R\in\mathsf{Sort}(\alpha)$ such that $x\in F_L$ and $y\in F_R$. 
\end{itemize} 

We now attend to Case B.2.3. 

\subsection*{Case B.2.3.} $\langle a,\infty,\infty \rangle \to \langle a,b,\infty \rangle$ \\ 

If $\alpha_n$ has label $\langle a^*, \infty, \infty \rangle$ and every successor of $\alpha_n$ has a label in which the symbol $\infty$ appears exactly once, then we let 
$$P = \{y:\alpha_n*y\in S\land \forall j<n\,(b > b_{\alpha_n(j)})\land b > |q_n|)\},$$ 
where $b$ here denotes the second entry in the label of $\alpha_n*y$ and $b_{\alpha_n(j)}$ denotes the button of $\alpha_n(j)$. Whether there is much work to be done in this case depends on whether the first node to have label $\langle a^*,\infty,\infty \rangle$ was or was not the root node of $T^L_1$. Formally, suppose $k$ is the least index such that $\alpha_k$ has a label in which the symbol $\infty$ appears exactly twice. 
\begin{itemize}
\item If, on the one hand, $\alpha_k = \alpha_0 = \emptyset$, then we choose any $y^*\in P$ and any extension $r$ of $q_n$ which presses $b_{y^*}$ and is such that, if the label of $\alpha_n*y^*$ is $\langle a^*,b^*,\infty \rangle$, then $\sigma^r(a^*,b^*)$, $(l^r(a^*))_0$, and $(l^r(b^*))_0$ are not all equal. In this case let $q_{n+1} = r$ and $\alpha_{n+1} = \gamma$ for any $\gamma\in S$ having $\gamma(n) = y^*$ with label $\langle a^*,b^*,\infty\rangle$. 

\item If, on the other hand, $k > 0$ and $\alpha_k\ne\emptyset$, then there is more work to do. Suppose that $\alpha_k(k-1) = x^*$. Let $P'\subseteq P$ contain precisely the elements $y$ of $P$ such that either Configuration I or Configuration II holds between $x^*$ and $y$. Either $P'\ne\emptyset$ and we complete the diagonalization in this case, or else $P'=\emptyset$ and we wait until the next case to diagonalize, but we are then guaranteed to succeed.
%renewcommand{\labelitemi}{$\scriptstyle\bullet$}
\end{itemize}
The important idea here is intuitively as follows. Either we may choose $y^*$ from $P$ so that Configuration I or Configuration II obtains, or else $x^*$ inhabits the left-hand column and $y^*$ inhabits the right-hand column and then $z^*$ may be chosen at the next transition node so that $\Phi^f(y^*,z^*)$ can be safely changed without disturbing the construction. 

\hspace*{-5mm}\begin{tikzpicture}[scale=0.75]
% first case
\node at (1.125,3.25) {\bf Config I};
\node at (1.125,-0.35) {\small $\Phi^f(x^*,y^*)$ free};
\draw (0,0) rectangle (1,3); \node at (0.5,0.75) {$y^*$};
\draw (1.25,0) rectangle (2.25,3); \node at (1.75,0.25) {$x^*$};
% second case
\begin{scope}[xshift=-0.45cm,yshift=0cm]
\node at (5.125,3.25) {\bf Config II};
\node at (5.125,-0.35) {\small $\Phi^f(x^*,y^*)$ free};
\draw (4,0) rectangle (5,3); \node at (4.5,0.75) {$y^*$};
\draw (5.25,0) rectangle (6.25,3); \node at (4.5,0.25) {$x^*$};
\end{scope}
% third case 
\begin{scope}[xshift=-0.9cm,yshift=0cm]
\node at (9.125,3.25) {\bf Config III};
\node at (9.125,-0.35) {\small $\Phi^f(x^*,y^*)$ not free};
\draw (8,0) rectangle (9,3); \node at (8.5,0.25) {$x^*$};
\draw (9.25,0) rectangle (10.25,3); \node at (9.75,0.75) {$y^*$};
\end{scope}
\draw[-latex] (9.55,1) -- (11,0.5);
\draw[-latex] (9.55,2) -- (11,2.5);
	%
	%first subcase
	\begin{scope}[xshift=11.4cm,yshift=2cm,scale=0.6]
	\node at (1.125,3.42) {\bf Config III.1};
	\draw (0,0) rectangle (1,3); \node at (0.5,0.34) {$x^*$};
	\draw (1.25,0) rectangle (2.25,3); \node at (1.75,0.75) {$y^*$};
	\node at (0.5,1.29) {$z^*$};
	\end{scope}
	%
	% second subcase
	\begin{scope}[xshift=11.4cm,yshift=-0.85cm,scale=0.6]
	\node at (1.125,3.42) {\bf Config III.2};
	\draw (0,0) rectangle (1,3); \node at (0.5,0.34) {$x^*$};
	\draw (1.25,0) rectangle (2.25,3); \node at (1.75,0.75) {$y^*$};
	\node at (1.75,1.29) {$z^*$};
	\end{scope}
\node at (14.0,0.25) {\small $\Phi^f(y^*,z^*)$};
\node at (14.0,-0.15) {\small free};
\node at (14.0,3.25) {\small $\Phi^f(y^*,z^*)$};
\node at (14.0,2.85) {\small free};
\end{tikzpicture}

%------------------------------------------------------------------------------------------------------------------------------

If $P'$ is nonempty, then we choose any $y^*\in P'$ with label $\langle a^*,b^*,\infty\rangle$ and a condition $r$ which extends $q_n$ except possibly having $(l^r(a^*))_1\ne (l^{q_n}(a^*))_1$ if the latter is defined; and we choose $y^*$ and $r$ such that $r$ presses $b_{y^*}$ and $\sigma^r(a^*,b^*)$, $(l^r(a^*))_0$, and $(l^r(b^*))_0$ are not all equal; and we let $q_{n+1} = r$ and $\alpha_{n+1} = \gamma$ for any $\gamma\in S$ having $\gamma(n) = y^*$ with label $\langle a^*,b^*,\infty\rangle$. Finally, if $\sigma$ is any non-terminal extension of $\alpha_{n+1}$ in $T^L_1$ such that Configuration III holds between $x^*$ and $y^*$, then we delete from $T^L_1$ $\sigma$ and all of its extensions. As $\alpha_{n+1}$ is not a transition node, we now return to Case B.1.

\setlength{\leftskip}{0pt}
\setlength{\rightskip}{0pt}

If on the other hand $P'$ is empty, then we proceed as in the non-transition case and will diagonalize in Case B.2.4 instead. 
%--------------------------------------------------------------------
% subcase 4
%--------------------------------------------------------------------
\subsection*{Case B.2.4.} $\langle a,b,\infty \rangle \to \langle a,b,c \rangle$ \\ 

If $\alpha_n$ has label $\langle a^*,b^*,\infty\rangle$ and every successor of $\alpha_n$ has a label in which only finite numbers occur, and if we failed to diagonalize at an earlier node in Case B.2.3, then we proceed as follows; otherwise we proceed as in the non-transition case. Let 
$$P = \{z:\alpha_n*z\in S\land \forall j<n\,(c > b_{\alpha_n(j)})\land c > |q_n|)\},$$ 
where $c$ here denotes the third entry in the label of $\alpha_n*z$ and $b_{\alpha_n(j)}$ denotes the button of $\alpha_n(j)$. Suppose $k$ is the least index such that $\alpha_k$ has a label in which the symbol $\infty$ appears exactly twice and that $l$ is the least index such that $\alpha_l$ has a label in which the symbol $\infty$ appears exactly once. Suppose that $\alpha_k(k-1) = x^*$ and that $\alpha_l(l-1) = y^*$. Let $P'\subseteq P$ contain precisely those elements of $P$ such that $x^*,z$ are in Configuration I and $P''\subseteq P$ contain precisely those elements of $P$ such that $y^*,z$ are in Configuration II. At least one of $P',P''$ must be nonempty; without loss of generality we assume that $P''$ is nonempty. Then we choose $z^*\in P''$ with label $\langle a^*,b^*,c^*\rangle$ and a condition $r$ which extends $q_n$ except possibly having $(l^r(b^*))_1\ne (l^{q_n}(b^*))_1$ if the latter is defined; and we choose $z^*$ and $r$ such that $\sigma^r(b^*,c^*)$, $(l^r(b^*))_0$, and $(l^r(c^*))_0$ are not all equal and $r$ presses $b_{z^*}$. We let $q_{n+1} = r$ and $\alpha_{n+1} = \gamma$ for any $\gamma\in S$ having $\gamma(n) = z^*$ with label $\langle a^*,b^*,c^*\rangle$. Such $z^*$ and $r$ exist by the same reasoning given in the previous case. Finally, if $\sigma$ is any non-terminal extension of $\alpha_{n+1}$ in $T^L_1$ such that Configuration II does not hold between $y^*$ and $z^*$, then we delete from $T^L_1$ $\sigma$ and all of its extensions. As $\alpha_{n+1}$ is not a transition node, we now return to Case B.1.
%As $\alpha_{n+1}$ is not a transition node, we now return to Case B.1.

We complete stage $s$ as follows. If added some set $P$ to $Y$, or if we defined $I_{s+1}$ to be the range of an infinite path through $T_0$ or $T_1$, we are done. Otherwise, we succeeded either in defining $\alpha_{n+1}$ for each non-terminal $\alpha_n$ in the sequence of nodes through $T^L_0$ or else in defining $\alpha_{n+1}$ for each non-terminal $\alpha_n$ in the sequence of nodes through $T^L_1$; say we succeeded in defining the sequence of nodes in $T^L_0$. This tree was in this case well-founded, so for some $i$, $\alpha_n$ was terminal. Then from the definition of the tree, there are some $F_L,F_R\subseteq\ran(\alpha_n)$ such that 
$\Gamma^{H^\Phi_{0,s}\cup (F_L\oplus F_R)}(2a+1)\downarrow = \Gamma^{H^\Phi_{0,s}\cup (F_L\oplus F_R)}(2b)\downarrow = \Gamma^{H^\Phi_{0,s}\cup (F_L\oplus F_R)}(2c+1)\downarrow = 1$ 
for some unequal $a,b,c\ge |p_s|$, say with use $u$. Let $p_{s+1} = q_i$, $H^\Phi_{0,s+1} = H^\Phi_{0,s}\cup F_L\oplus F_R$, and $I_{s+1} = \{x\in I_s:x > u\}$.
\end{proof}

%\newpage
\section{Summary}

The diagram below records all the reductions between the principles studied in this paper. In the diagram, we write $Q\rightarrow P$ to mean that problem $P$ reduces to problem $Q$ in the indicated sense. 

%-------------------------------------------------------------------------------------
%----------------------- The Big Summary Figure --------------------------------------
%-------------------------------------------------------------------------------------

\begin{center}
\begin{tikzpicture} 

% parameters

\pgfmathsetmacro{\vertical}{1.75}
%\pgfmathsetmacro{\horizontal}{4.25}
\pgfmathsetmacro{\horizontal}{5.25}
\pgfmathsetmacro{\xshift}{0.12}

% captions 

\node (c) at ({0*\horizontal},{3.5*\vertical}) {\Large $\boxed{\le_{\rm c}}$};
\node (w) at ({1*\horizontal},{3.5*\vertical}) {\Large $\boxed{\le_{\rm W}}$};
%\node (sc) at ({2*\horizontal},{3.5*\vertical}) {\Large $\boxed{\le_{\rm sc}}$};
%\node (sw) at ({3*\horizontal},{3.5*\vertical}) {\Large $\boxed{\le_{\rm sW}}$};
\node (strong) at ({2*\horizontal},{3.5*\vertical}) {\Large $\boxed{\le_{\rm sc}\, ,\,\le_{\rm sW}}$};

% SRT
\node (SRTc) at ({0*\horizontal},{3*\vertical}) {$\mathsf{SRT}^2_2$}; \node (SRTsc) at ({2*\horizontal},{3*\vertical}) {$\mathsf{SRT}^2_2$};
\node (SRTw) at ({1*\horizontal},{3*\vertical}) {$\mathsf{SRT}^2_2$}; %\node (SRTsw) at ({3*\horizontal},{3*\vertical}) {$\mathsf{SRT}^2_2$};
\node (SRTcl) at ({0*\horizontal-\xshift},{3*\vertical}) {${}^{~}_{~}$}; \node (SRTscl) at ({2*\horizontal-\xshift},{3*\vertical}) {${}^{~}_{~}$};
\node (SRTwl) at ({1*\horizontal-\xshift},{3*\vertical}) {${}^{~}_{~}$}; %\node (SRTswl) at ({3*\horizontal-\xshift},{3*\vertical}) {${}^{~}_{~}$};
\node (SRTcr) at ({0*\horizontal+\xshift},{3*\vertical}) {${}^{~}_{~}$}; \node (SRTscr) at ({2*\horizontal+\xshift},{3*\vertical}) {${}^{~}_{~}$};
\node (SRTwr) at ({1*\horizontal+\xshift},{3*\vertical}) {${}^{~}_{~}$}; %\node (SRTswr) at ({3*\horizontal+\xshift},{3*\vertical}) {${}^{~}_{~}$};

% SPT
\node (SPTc) at ({0*\horizontal},{2*\vertical}) {$\mathsf{SPT}^2_2$}; \node (SPTsc) at ({2*\horizontal},{2*\vertical}) {$\mathsf{SPT}^2_2$};
\node (SPTw) at ({1*\horizontal},{2*\vertical}) {$\mathsf{SPT}^2_2$}; %\node (SPTsw) at ({3*\horizontal},{2*\vertical}) {$\mathsf{SPT}^2_2$};
\node (SPTcl) at ({0*\horizontal-\xshift},{2*\vertical}) {${}^{~}_{~}$}; \node (SPTscl) at ({2*\horizontal-\xshift},{2*\vertical}) {${}^{~}_{~}$};
\node (SPTwl) at ({1*\horizontal-\xshift},{2*\vertical}) {${}^{~}_{~}$}; %\node (SPTswl) at ({3*\horizontal-\xshift},{2*\vertical}) {${}^{~}_{~}$};
\node (SPTcr) at ({0*\horizontal+\xshift},{2*\vertical}) {${}^{~}_{~}$}; \node (SPTscr) at ({2*\horizontal+\xshift},{2*\vertical}) {${}^{~}_{~}$};
\node (SPTwr) at ({1*\horizontal+\xshift},{2*\vertical}) {${}^{~}_{~}$}; %\node (SPTswr) at ({3*\horizontal+\xshift},{2*\vertical}) {${}^{~}_{~}$};

% SIPT
\node (SIPTc) at ({0*\horizontal},{1*\vertical}) {$\mathsf{SIPT}^2_2$}; \node (SIPTsc) at ({2*\horizontal},{1*\vertical}) {$\mathsf{SIPT}^2_2$};
\node (SIPTw) at ({1*\horizontal},{1*\vertical}) {$\mathsf{SIPT}^2_2$}; %\node (SIPTsw) at ({3*\horizontal},{1*\vertical}) {$\mathsf{SIPT}^2_2$};
\node (SIPTcl) at ({0*\horizontal-\xshift},{1*\vertical}) {${}^{~}_{~}$}; \node (SIPTscl) at ({2*\horizontal-\xshift},{1*\vertical}) {${}^{~}_{~}$};
\node (SIPTwl) at ({1*\horizontal-\xshift},{1*\vertical}) {${}^{~}_{~}$}; %\node (SIPTswl) at ({3*\horizontal-\xshift},{1*\vertical}) {${}^{~}_{~}$};
\node (SIPTcr) at ({0*\horizontal+\xshift},{1*\vertical}) {${}^{~}_{~}$}; \node (SIPTscr) at ({2*\horizontal+\xshift},{1*\vertical}) {${}^{~}_{~}$};
\node (SIPTwr) at ({1*\horizontal+\xshift},{1*\vertical}) {${}^{~}_{~}$}; %\node (SIPTswr) at ({3*\horizontal+\xshift},{1*\vertical}) {${}^{~}_{~}$};

% D
\node (Dc) at ({0*\horizontal},{0*\vertical}) {$\mathsf{D}^2_2$}; \node (Dsc) at ({2*\horizontal},{0*\vertical}) {$\mathsf{D}^2_2$};
\node (Dw) at ({1*\horizontal},{0*\vertical}) {$\mathsf{D}^2_2$}; %\node (Dsw) at ({3*\horizontal},{0*\vertical}) {$\mathsf{D}^2_2$};
\node (Dcl) at ({0*\horizontal-\xshift},{0*\vertical}) {${}^{~}_{~}$}; \node (Dscl) at ({2*\horizontal-\xshift},{0*\vertical}) {${}^{~}_{~}$};
\node (Dwl) at ({1*\horizontal-\xshift},{0*\vertical}) {${}^{~}_{~}$}; %\node (Dswl) at ({3*\horizontal-\xshift},{0*\vertical}) {${}^{~}_{~}$};
\node (Dcr) at ({0*\horizontal+\xshift},{0*\vertical}) {${}^{~}_{~}$}; \node (Dscr) at ({2*\horizontal+\xshift},{0*\vertical}) {${}^{~}_{~}$};
\node (Dwr) at ({1*\horizontal+\xshift},{0*\vertical}) {${}^{~}_{~}$}; %\node (Dswr) at ({3*\horizontal+\xshift},{0*\vertical}) {${}^{~}_{~}$};

% c arrows 

\draw[-latex] (SRTcl) -- (SPTcl); 
\draw[-latex] (SPTcl) -- (SIPTcl); 
\draw[-latex] (SIPTcl) -- (Dcl);
\draw[-latex] (Dcr) -- (SIPTcr); 
\draw[-latex] (SIPTcr) -- (SPTcr); 
\draw[-latex] (SPTcr) -- (SRTcr);

% sc arrows 

\draw[-latex] (SRTsc) -- (SPTsc); 
\draw[-latex] (SPTsc) -- (SIPTsc); 
\draw[-latex] (SIPTsc) -- (Dsc);

% W arrows 

\draw[-latex] (SIPTw) -- (Dw);
\draw[-latex] (SRTwl) -- (SPTwl); 
\draw[-latex] (SPTwl) -- (SIPTwl);
\draw[-latex] (SPTwr) -- (SRTwr);
\draw[-latex] (SIPTwr) -- (SPTwr);

% sW arrows

%\draw[-latex] (SRTsw) -- (SPTsw);
%\draw[-latex] (SPTsw) -- (SIPTsw); 
%\draw[-latex] (SIPTsw) -- (Dsw);

\end{tikzpicture}
\end{center}

\bibliography{proofs}{}

\providecommand{\bysame}{\leavevmode\hbox to3em{\hrulefill}\thinspace}
\providecommand{\MR}{\relax\ifhmode\unskip\space\fi MR }
% \MRhref is called by the amsart/book/proc definition of \MR.
\providecommand{\MRhref}[2]{%
  \href{http://www.ams.org/mathscinet-getitem?mr=#1}{#2}
}
\providecommand{\href}[2]{#2}
\begin{thebibliography}{10}

\bibitem{chonglemppyang}
C.~T. Chong, Steffen Lempp, and Yue Yang, \emph{On the role of the collection
  principle for {$\Sigma^0_2$}-formulas in second-order reverse mathematics},
  Proceedings of the American Mathematical Society \textbf{138} (2010), no.~3,
  1093 -- 1100.

\bibitem{doraisetal}
Fran\c{c}ois Dorais, Damir Dzhafarov, Jeffry Hirst, Joseph Mileti, and Paul
  Shafer, \emph{On uniform relationships between combinatorial problems},
  Transactions of the American Mathematical Society \textbf{368} (2016), no.~2,
  1321 -- 1359.

\bibitem{damirfirst}
Damir Dzhafarov, \emph{Cohesive avoidance and strong reductions}, Proceedings
  of the American Mathematical Society \textbf{143} (2015), no.~2, 869 -- 876.

\bibitem{combinatorialprinciples}
\bysame, \emph{Strong reductions between combinatorial principles}, Journal of
  Symbolic Logic \textbf{81} (2016), no.~4, 1405 -- 1431.

\bibitem{polarized}
Damir Dzhafarov and J.~L. Hirst, \emph{The polarized ramsey's theorem}, Archive
  for Mathematical Logic \textbf{48} (2009), no.~2, 141 -- 157.

\bibitem{coneavoidance}
Damir Dzhafarov and Carl~G. Jockusch, \emph{Ramsey's theorem and cone
  avoidance}, Journal of Symbolic Logic \textbf{74} (2009), no.~2, 557 -- 578.

\bibitem{etal}
Damir Dzhafarov, Ludovic Patey, Reed Solomon, and Linda~Brown Westrick,
  \emph{Ramsey's theorem for singletons and strong computable reducibility},
  Proceedings of the American Mathematical Society \textbf{145} (2017), no.~3,
  1343 -- 1355.

\bibitem{erdos}
Paul Erd\H{o}s and Richard Rado, \emph{A partition calculus in set theory},
  Bulletin of the American Mathematical Society \textbf{62} (1965), no.~5, 427
  -- 489.

\bibitem{hirschfeldt}
Denis Hirschfeldt and Carl Jockusch, \emph{On notions of computability
  theoretic reduction between {$\Pi^1_2$} principles}, Journal of Mathematical
  Logic \textbf{16} (2016), no.~1, 1650002.

\bibitem{weihrauch}
Klaus Weihrauch, \emph{The degrees of discontinuity of some translators between
  representations of the real numbers}, Tech. Report TR-92-050, International
  Computer Science Institute, Berkeley, July 1992.

\end{thebibliography}
\bibliographystyle{amsplain}

\end{document}